\let\Oldepsilon\epsilon
\let\Oldvarepsilon\varepsilon
  \renewcommand{\varepsilon}{\Oldepsilon}
  \renewcommand{\epsilon}{\Oldvarepsilon}
\DeclareMathOperator{\Kern}{\mathcal{K}}
\DeclareMathOperator{\supp}{\mathrm{supp}}
\DeclareMathOperator{\dom}{\mathrm{dom}}
\newcommand{\N}{\mathbb{N}}
\newcommand{\R}{\mathbb{R}}
\newcommand{\C}{\mathbb{C}}
\newcommand{\Space}{\mathrm{X}}
\newcommand{\Heis}{\mathrm{H}}
\newcommand{\dist}{\varrho}
\newcommand{\dil}{\delta}
\newcommand{\tc}{\,:\,}
\newcommand{\leftopenint}{\left]}
\newcommand{\rightopenint}{\right[}
\newcommand{\leftclosedint}{\left[}
\newcommand{\rightclosedint}{\right]}
\DeclareMathOperator*{\esssup}{\mathrm{ess\,sup}}
\newtheorem{theorem}{Theorem}
\newtheorem{proposition}[theorem]{Proposition}
\newtheorem{coro}[theorem]{Corollary}
\newtheorem{lemma}[theorem]{Lemma}
\newcommand{\vecL}{\mathbf{L}}
\newcommand{\vecT}{\mathbf{T}}
\newcommand{\vecP}{\mathbf{P}}
\newcommand{\vecONE}{\tilde{1}}
\newcommand{\jone}{{j}}
\newcommand{\jtwo}{{k}}
\begin{document}

\title[Weighted Plancherel estimates for the Grushin operators]
{Weighted Plancherel estimates and sharp spectral multipliers for the Grushin operators}

\author{Alessio Martini}
\address{Alessio Martini \\ Mathematisches Seminar \\ Christian-Albrechts-Universit\"at zu Kiel \\ Ludewig-Meyn-Str.\ 4 \\ D-24098 Kiel \\ Germany}
\email{martini@math.uni-kiel.de}

\author{Adam Sikora}
\address{Adam Sikora \\ Department of Mathematics \\ Macquarie University \\ NSW 2109 \\ Australia}
\email{adam.sikora@mq.edu.au}

\thanks{This research was supported by Australian Research Council Discovery grants DP110102488 (A.M., A.S.).
The first-named author thanks the Australian Research Council and the Alexander von Humboldt Foundation for support when this work was begun and when it was finished, and the University of New South Wales and the Christian-Albrechts-Universit\"at zu Kiel for their hospitality.}

\begin{abstract}
We study the Grushin operators acting on $\R^{d_1}_{x'}\times \R^{d_2}_{x''}$ and 
defined by the formula
\[
L=-\sum_{\jone=1}^{d_1}\partial_{x'_\jone}^2 - \left(\sum_{\jone=1}^{d_1}|x'_\jone|^2\right) 
\sum_{\jtwo=1}^{d_2}\partial_{x''_\jtwo}^2.
\]
We obtain weighted Plancherel estimates for the considered operators. 
As a consequence we prove $L^p$ spectral multiplier results and Bochner-Riesz summability
for the Grushin operators. These multiplier results are sharp if $d_1 \ge d_2$. 
We discuss also an interesting  phenomenon for weighted Plancherel estimates 
for $d_1 <d_2$. The described spectral multiplier theorem is the analogue of the result
for the sublaplacian on the Heisenberg group obtained by D. M\"uller and E.M. Stein and by
W.~Hebisch.
\end{abstract}

\maketitle

\section{Introduction}

Let $(\Space,\mu)$ be a measure space and $L$ be a (possibly unbounded) self-adjoint operator on $L^2(\Space)$. If $E$ denotes the spectral resolution of $L$ on $\R$, then a functional calculus for $L$ can be defined via spectral integration and, for every Borel function $F : \R \to \C$, the operator
\[F(L) = \int_\R F(\lambda) \,dE(\lambda)\]
is bounded on $L^2(\Space)$ if and only if the ``spectral multiplier'' $F$ is an ($E$-essentially) bounded function. Characterizing, or at least giving (non-trivial) sufficient conditions for the $L^p$-boundedness of the operator $F(L)$, for some $p \neq 2$, in terms of properties of the multiplier $F$, is a much more complicated issue, and a huge amount of literature is devoted to instances of this problem (we refer the reader to \cite{cowling_spectral_2001, duong_singular_1999, duong_plancherel-type_2002, hebisch_multiplier_1993, hebisch_functional_1995, martini_multipliers_2010, mauceri_vectorvalued_1990, mller_spectral_1994, sikora_imaginary_2001} for a detailed discussion of the relevant literature).

Here we are interested in the case where $\Space = \R^{d_1} \times \R^{d_2}$, with Lebesgue measure, and $L$ is the Grushin operator, that is,
\[L = -\Delta_{x'} - |x'|^2 \Delta_{x''},\]
where $x',x''$ denote the two components of a point $x \in \R^{d_1} \times \R^{d_2}$, while $\Delta_{x'},\Delta_{x''}$ are the corresponding partial Laplacians, and $|x'|$ is the Euclidean norm of $x'$.

Let $W_q^s(\R)$ denote the $L^q$ Sobolev space on $\R$ of (fractional) order $s$, and define a ``local Sobolev norm'' by the formula
\[
\|F\|_{MW_q^s} = \sup_{t>0} \|\eta \, F_{(t)} \|_{W_q^s},
\]
where $F_{(t)}(\lambda) =F(t \lambda)$, and $\eta \in C^\infty_c(\leftopenint 0,\infty\rightopenint)$ is a not identically zero auxiliary function; note that different choices of $\eta$ give rise to equivalent local norms. Next set $D = \max\{d_1+d_2,2d_2\}$. Then our main result reads as follows.

\begin{theorem}\label{thm:maintheorem}
Suppose that a function $F : \R \to \C$ satisfies
\begin{equation}\label{eq:mhcond}
\|F\|_{MW_2^s}  < \infty
\end{equation}
for some $s > D/2$. Then the operator $F(L)$ is of weak type $(1,1)$ and bounded on $L^p(\Space)$ for all $p \in \leftopenint1,\infty\rightopenint$. In addition,
\begin{equation}\label{eq:normbounds}
\|F(L)\|_{L^1 \to L^{1,w}} \leq C \|F\|_{MW_2^s}, \qquad \|F(L)\|_{L^p \to L^{p}} \leq C_p \|F\|_{MW_2^s}
\end{equation}
for all $r \geq 0$.
\end{theorem}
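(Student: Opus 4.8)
The plan is to reduce Theorem~\ref{thm:maintheorem} to a Calder\'on--Zygmund-type argument on the metric measure space $(\Space, \dist, \mu)$, where $\dist$ is the control (Carnot--Carath\'eodory) distance associated with the Grushin vector fields and $\mu$ is Lebesgue measure. This space is of homogeneous type, with a dimension at infinity equal to $d_1 + d_2$ but a local dimension at points of the singular set $\{x' = 0\}$ that is larger, namely $d_1 + 2d_2$; these two regimes are exactly why the number $D = \max\{d_1+d_2, 2d_2\}$ shows up (note $d_1 + 2d_2 \le 2d_2 + 2d_2$ is not the relevant bound---rather the interplay of volume growth and the Plancherel measure produces $D$). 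The standard machinery (as in \cite{duong_plancherel-type_2002}, or the Hebisch--M\"uller--Stein approach) says that an estimate of the form $\|F\|_{MW_\infty^s} < \infty \Rightarrow F(L)$ weak type $(1,1)$ holds once one has (i) Gaussian-type heat kernel bounds for $e^{-tL}$ with respect to $\dist$, and (ii) a suitable \emph{weighted Plancherel estimate}, i.e.\ an $L^2$ bound for the kernel of $F(\sqrt L)$ (or $F(L)$) multiplied by a power of the distance to a fixed point, controlled by $\|F\|_{L^2}$ times the appropriate volume factor. Since we have the stronger hypothesis $\|F\|_{MW_2^s}<\infty$ with an $L^2$ Sobolev norm and the sharp exponent $s > D/2$, the Plancherel estimate must be used in its full strength.

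First I would record the heat kernel bounds: the Grushin heat kernel satisfies Gaussian estimates $|p_t(x,y)| \lesssim \mu(B(x,\sqrt t))^{-1} \exp(-c\,\dist(x,y)^2/t)$, together with on-diagonal time derivatives---these are classical and follow, e.g., from the subelliptic heat kernel theory or by explicit computation after a partial Fourier transform in $x''$. Next, and this is the technical heart, I would establish the weighted Plancherel estimate: for the kernel $K_{F(\sqrt L)}(x,y)$ and for a suitable weight $w(x,y)$ comparable to a power of $\dist(x,y)$ (or to $|x'|$, or to combinations reflecting the anisotropy), one has
\[
\Big\| \,w(\cdot, y)\, K_{F(t\sqrt L)}(\cdot, y)\, \Big\|_{L^2(\Space)}^2 \le C\, t^{-?}\, \mu(B(y, t))^{-1} \cdot (\text{sup scale factor}) \cdot \|F\|_{L^2}^2,
\]
with the exponent chosen so that after summing a Littlewood--Paley decomposition $F = \sum_j F_j$ and invoking Cauchy--Schwarz one lands exactly at the threshold $s > D/2$. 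Concretely I would: diagonalize $L$ via the Hermite-type expansion obtained by taking the Fourier transform in $x''$ (turning $L$ into a family of rescaled Hermite operators $-\Delta_{x'} + |\xi|^2|x'|^2$ indexed by $\xi \in \R^{d_2}$), write $K_{F(\sqrt L)}$ as an integral over $\xi$ of Hermite spectral projections, and then estimate the $L^2$ norm with the weight by exploiting precise bounds on (weighted) Hermite functions.

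The main obstacle I anticipate is proving the sharp weighted Plancherel estimate uniformly across the two geometric regimes, i.e.\ simultaneously near and far from the singular manifold $\{x'=0\}$ and for all frequency scales $|\xi|$. The subtlety is that the weight appropriate for the ``far'' region (where the geometry is essentially Euclidean of dimension $d_1+d_2$) is \emph{not} the same as the one needed near the singular set (where the effective dimension is $d_1 + 2d_2$), so one must either interpolate between two weighted estimates or design a single weight that degenerates appropriately---and then verify that the resulting endpoint is governed by $D/2$ and not something worse. A secondary difficulty is the bookkeeping in the Hermite-function estimates: one needs bounds on $\||x'|^\alpha h_k(|\xi|^{1/2} x')\|_{L^2}$ and on sums $\sum_k |F(\text{eigenvalue}_k)|^2 (\ldots)$ that are uniform in $\xi$, and these require care precisely because the spectrum of the rescaled Hermite operator depends linearly on $|\xi|$. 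Once the weighted Plancherel estimate is in hand with the correct exponent, the passage to weak type $(1,1)$ and $L^p$ boundedness, and then---by a standard duality and interpolation argument combined with the self-adjointness of $F(L)$ for real $F$---to the full range $p \in \leftopenint 1, \infty\rightopenint$ with the stated norm bounds, is routine and follows the template of \cite{duong_plancherel-type_2002, hebisch_functional_1995}.
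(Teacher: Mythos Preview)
Your plan is essentially the paper's approach: Gaussian heat-kernel bounds plus a weighted Plancherel estimate obtained via the partial Fourier transform in $x''$ and Hermite expansions, plugged into the Calder\'on--Zygmund machinery of \cite{duong_plancherel-type_2002,hebisch_functional_1995}. The one point where you are noncommittal but the paper is specific is the weight: it is \emph{not} a power of $\dist(x,y)$ (that would only recover the threshold $Q/2$) but rather $w_R(x,y)=\min\{R,|y'|^{-1}\}\,|x'|$; the estimate $\|\,|x'|^\gamma \Kern_{F(L)}(\cdot,y)\|_2$ is obtained from the operator inequality $\|\,|\vecP|^\gamma f\|_2 \le C_\gamma \|L^{\gamma/2}|\vecT|^{-\gamma} f\|_2$ (a consequence of subellipticity on the Heisenberg--Reiter group) together with the Hermite-sum bound, and this $|x'|$-weighted estimate is then combined with the standard $\dist$-weighted one via Mauceri--Meda interpolation to reach the threshold $D/2$.
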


When $d_1=d_2=1$, this result proves the conjecture stated on page 5 of \cite{meyer_estimates_2006}, and in fact we obtain a far-going generalization of that statement.

Note that, in the case $d_1 \geq d_2$, the lower bound for the order of differentiability $s$ of the multiplier $F$ required in Theorem~\ref{thm:maintheorem} is $(d_1+d_2)/2$, that is, half the topological dimension of $\Space$; since the Grushin operator $L$ is elliptic in the region where $x'\neq 0$, the argument in \cite{sikora_imaginary_2001} can be adapted to show that our result is sharp (see Section~\ref{section:sharp} below for more details). In the case $d_2 > d_1$, instead, a gap of $(d_2-d_1)/2$ remains between half the topological dimension and the threshold on $s$ in Theorem~\ref{thm:maintheorem}.

If one disregarded the constraint on $s$, then the above result would follow from a general theorem \cite{hebisch_functional_1995,duong_plancherel-type_2002} proved in the context of a doubling metric-measure space $(\Space,\dist,\mu)$, with an operator $L$ satisfying Gaussian-type heat kernel bounds expressed in terms of the distance $\dist$. In this general context, the mentioned weak type and $L^p$-boundedness of $F(L)$ hold whenever $\|F\|_{MW_\infty^s} < \infty$ for some $s > Q/2$, where $Q$ denotes the ``homogeneous dimension'' of the metric-measure space.

If $\Space$ is $\R^{d_1} \times \R^{d_2}$ with Lebesgue measure and $L$ is the Grushin operator, a ``control distance'' $\dist$ associated to $L$ can be introduced, for which $Q = d_1 + 2d_2$ and $L$ satisfies Gaussian-type bounds \cite{robinson_analysis_2008}, hence the general theorem applies in this case. 
Our Theorem~\ref{thm:maintheorem} gives a better result, with respect to both the order of differentiability required on the multiplier (since $D < Q$) and the type of Sobolev norm used ($L^2$ instead of $L^\infty$). 

Our approach allows us to consider also the Bochner-Riesz means associated to the Grushin operator. Since these correspond to compactly supported multipliers, we can obtain a better result than the one given by direct application of Theorem~\ref{thm:maintheorem}.
\begin{theorem}\label{thm:maintheoremriesz}
Suppose that $\kappa > (D-1)/2$ and $p \in \leftclosedint 1,\infty\rightclosedint$. Then the Bochner-Riesz means $(1-t L)_+^\kappa$ are bounded on $L^p(\Space)$ uniformly in $t \in \leftclosedint 0,\infty\rightopenint$.
\end{theorem}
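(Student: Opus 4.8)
The plan is to use the self-similarity of $L$ to reduce uniform boundedness in $t$ to a single endpoint estimate, and then to extract that estimate from the weighted Plancherel bounds by decomposing $(1-\lambda)_+^\kappa$ dyadically around its endpoint $\lambda=1$. The Grushin operator is homogeneous with respect to the dilations $\dil_r(x',x'')=(rx',r^2x'')$, which satisfy $L\circ\dil_r=r^2\,\dil_r\circ L$ and act on each $L^p(\Space)$ as an isometry up to a constant factor; hence conjugation by $\dil_r$ preserves $L^p\to L^p$ operator norms and carries $(1-tL)_+^\kappa$ into $(1-L)_+^\kappa$ for a suitable $r$. Thus $\|(1-tL)_+^\kappa\|_{L^p\to L^p}$ is independent of $t>0$, the case $t=0$ being trivial, and it suffices to treat $(1-L)_+^\kappa$. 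Since $L$ is self-adjoint and $(1-\lambda)_+^\kappa$ is real, $(1-L)_+^\kappa$ is self-adjoint, so its $L^1\to L^1$ and $L^\infty\to L^\infty$ norms agree; combined with the trivial bound $\|(1-L)_+^\kappa\|_{L^2\to L^2}\le1$ and Riesz--Thorin interpolation, the whole theorem reduces to $\|(1-L)_+^\kappa\|_{L^1\to L^1}<\infty$ for $\kappa>(D-1)/2$. For $p\in\leftopenint1,\infty\rightopenint$ this bound is in fact already contained in Theorem~\ref{thm:maintheorem}, since $\|(1-t\,\cdot)_+^\kappa\|_{MW_2^s}$ is finite and independent of $t$ whenever $s<\kappa+1/2$; the genuinely new content here is the two endpoints $p=1,\infty$.

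\emph{Dyadic decomposition.} Write $(1-\lambda)_+^\kappa=m_0(\lambda)+\sum_{j\ge1}m_j(\lambda)$ for a dyadic partition of unity, where $m_0\in C_c^\infty(\R)$ is supported where $(1-\lambda)_+^\kappa$ is smooth and, for $j\ge1$, $m_j$ is supported in $\{1-2^{1-j}\le\lambda\le1-2^{-j-1}\}$ and has the form $m_j(\lambda)=2^{-\kappa j}\,\psi_j(2^j(1-\lambda))$ with the $\psi_j$ bounded in every $C^N$ uniformly in $j$. The term $m_0(L)$ is bounded on every $L^p$ by Theorem~\ref{thm:maintheorem}. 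For the rest it suffices to prove, uniformly in $y\in\Space$, a bound of the shape $\|K_{m_j(L)}(\cdot,y)\|_{L^1}\lesssim 2^{-(\kappa-(D-1)/2-\epsilon)j}$ with $\epsilon>0$ small and fixed: then the series $\sum_{j\ge1}\|K_{m_j(L)}(\cdot,y)\|_{L^1}$ converges as soon as $\kappa>(D-1)/2$, so $\sup_y\|K_{(1-L)_+^\kappa}(\cdot,y)\|_{L^1}<\infty$, which gives the $L^1\to L^1$ bound.

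\emph{Per-piece estimate.} For each $j$ I would introduce a weight $w(x,y)$ — morally a power $(1+\dist(x,y))^\alpha$, combined with the geometric factors comparing $|x'|$ with $\dist(x,y)$ that occur in the weighted Plancherel estimates — and write $\|K_{m_j(L)}(\cdot,y)\|_{L^1}\le\|w(\cdot,y)^{-1}\|_{L^2}\,\|w(\cdot,y)\,K_{m_j(L)}(\cdot,y)\|_{L^2}$. The weight is tuned so that $\sup_y\|w(\cdot,y)^{-1}\|_{L^2}<\infty$ precisely when $\alpha>D/2$; this is the step where the effective dimension $D$, rather than the homogeneous dimension $Q=d_1+2d_2$, governs the exponent. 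On the other factor the weighted Plancherel estimates give $\|w(\cdot,y)\,K_{m_j(L)}(\cdot,y)\|_{L^2}\lesssim\|m_j\|_{W_2^\alpha}\lesssim 2^{-\kappa j}\,2^{\alpha j}\,2^{-j/2}$, the last factor reflecting the width $2^{-j}$ of the support of $m_j$. Choosing $\alpha=D/2+\epsilon$ yields $\|K_{m_j(L)}(\cdot,y)\|_{L^1}\lesssim 2^{((D-1)/2-\kappa+\epsilon)j}$, which is exactly what was needed.

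\emph{Main obstacle.} With the weighted Plancherel estimates granted, the delicate point is this per-piece bound: one must work with a weight that is at once adapted to the Grushin control geometry, so that the weighted Plancherel inequality applies with the exponent pushed up to $D/2$, and inverse-square-integrable with that same threshold, and then keep careful track of the narrow support of $m_j$ so that the product of the two factors reproduces precisely the classical Bochner--Riesz balance $\kappa>(D-1)/2$. In essence this amounts to re-deriving, in the anisotropic Grushin setting, the sharp pointwise decay rate of the Bochner--Riesz kernel; the surrounding dyadic decomposition, interpolation and duality are routine.
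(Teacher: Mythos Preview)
Your approach is correct and matches the paper's: reduce to a uniform $L^1$ kernel bound via self-adjointness and interpolation, then exploit the weighted Plancherel machinery to handle the singularity of $(1-\lambda)_+^\kappa$ at $\lambda=1$. The paper's execution is more concise, however. Rather than decomposing dyadically near the endpoint and summing, it splits $F(\lambda)=(1-|\lambda|)_+^\kappa$ once into a smooth piece $\eta F$ near the origin and the remainder $(1-\eta)F$, observes that $(1-\eta)F\in W_2^\beta$ for every $\beta<\kappa+1/2$, and applies the already-packaged $L^1$ kernel estimate \eqref{eq:speciall1weighted} (with $\alpha=0$ and $\beta\in\left]D/2,\kappa+1/2\right[$) directly to the whole remainder. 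Your dyadic decomposition together with your per-piece estimate is precisely an explicit proof of that Sobolev membership combined with a re-derivation of \eqref{eq:speciall1weighted} from \eqref{eq:speciall2weighted} and \eqref{eq:volumebound}; both routes encode the same balance $\kappa>(D-1)/2$.

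One small slip: you invoke Theorem~\ref{thm:maintheorem} to bound the smooth piece $m_0(L)$ on $L^1$, but that theorem only yields weak type $(1,1)$ at the endpoint. The paper handles the smooth piece via \eqref{eq:standardl1weighted} instead, which requires only $m_0\in W_\infty^\beta$ for some $\beta>Q/2$ and gives a genuine $L^1$ kernel bound; the same fix works for your $m_0$.
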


As in many other works on the subject, the proof of our results is based on the analysis of the integral kernel $\Kern_{F(L)} : \Space \times \Space \to \C$ of the operator $F(L)$, defined by the identity
\begin{equation}\label{eq:integralkernel}
F(L) f(x) = \int_\Space \Kern_{F(L)}(x,y) \, f(y) \,dy.
\end{equation}
To be precise, if $F$ is bounded and compactly supported, then there exists a Borel function $\Kern_{F(L)}$ such that \eqref{eq:integralkernel} holds for all $f \in L^2(\Space)$ and for almost all $x \in \Space$ (cf.\ \cite[Lemma 2.2]{duong_plancherel-type_2002}). However, a multiplier $F$ satisfying \eqref{eq:mhcond} need not be compactly supported, and the integral kernel $\Kern_{F(L)}$ in general exists only as a distribution; nevertheless the Calder\'on-Zygmund theory of singular integral operators allows one to derive the weak type $(1,1)$ of $F(L)$ from suitable estimates on the integral kernels corresponding to the compactly supported pieces in a dyadic decomposition of $F$.

As highlighted in \cite{duong_plancherel-type_2002}, a crucial step in this approach is a ``Plancherel estimate'', which in its basic form is the inequality
\begin{equation}\label{eq:basicplancherel}
\esssup_{y \in \Space} |B(y,R^{-1})|^{1/2} \, \|\Kern_{F( L)}(\cdot,y)\|_{L^2(\Space)} 
\leq C \|F_{(R^2)}\|_{L^\infty(\R)},
\end{equation}
for all $R > 0$ and all $F : \R \to \C$ supported in the interval $\leftclosedint R^2,4R^2 \rightclosedint$; here $|B(x,r)|$ denotes the Lebesgue measure of the $\dist$-ball of center $x \in \Space$ and radius $r$. 
Such an estimate holds, \emph{mutatis mutandis}, for any operator $L$ satisfying Gaussian heat kernel bounds, but usually it does not lead to optimal spectral multiplier results.
In the present paper we obtain for the Grushin operator $L$ an improvement of \eqref{eq:basicplancherel}, that is, a ``weighted Plancherel estimate'' of the form
\begin{equation}\label{eq:weightedplancherel}
\esssup_{y \in \Space} |B(y,R^{-1})|^{1/2} \, \|(1+w_R(\cdot,y))^\gamma \, \Kern_{F( L)}(\cdot,y)\|_{L^2(\Space)} \leq C_\gamma \|F_{(R^2)}\|_{L^2(\R)},
\end{equation}
where $\gamma \in \leftclosedint 0, d_2/2\rightopenint$ and
\begin{equation}\label{eq:weight}
w_R(x,y) = \min\{R,|y'|^{-1}\} |x'|.
\end{equation}
The improvement of the Plancherel estimate yields, via the interpolation technique of \cite{mauceri_vectorvalued_1990}, a sharp multiplier theorem, at least for $d_1 \leq d_2$. In the case $d_1 > d_2$, an interesting phenomenon occurs: although \eqref{eq:weightedplancherel} holds for all $\gamma \in \leftclosedint 0, d_2/2\rightopenint$, we can exploit it only when $\gamma < d_1/2$; whence the gap between the threshold $D/2$ in Theorem~\ref{thm:maintheorem} and half the topological dimension.

The use of weighted Plancherel estimates in multiplier theorems is not new \cite{hebisch_multiplier_1993,mller_spectral_1994,hebisch_functional_1995,cowling_spectral_2001}, and in particular they have been employed to obtain sharp results for homogeneous sublaplacians on Heisenberg and related groups. In the case $d_2 = 1$, the Grushin operator $L$ corresponds, via a suitable quotient, to the homogeneous sublaplacian on the $(2d_1+1)$-dimensional Heisenberg group. For $d_2 > 1$, an analogous correspondence holds if we replace the Heisenberg group with a Heisenberg-Reiter group $\Heis_{d_1,d_2}$ (see details below), and a multiplier theorem for the homogeneous sublaplacian on $\Heis_{d_1,d_2}$ holds \cite[Corollary 6.1]{martini_joint}, giving the same gap between the threshold and half the topological dimension that appears in Theorem~\ref{thm:maintheorem}. We remark, however, that the weighted Plancherel estimate for the Grushin operator is not an immediate consequence of the analogous estimate on the Heisenberg-Reiter group: because of the absence of the group structure, here some careful estimates are needed, exploiting known asymptotics for the Hermite functions.

In a recent work \cite{jotsaroop_riesz_2011} the Grushin operator in the case $d_2 = 1$ is considered and results analogous to our Theorems~\ref{thm:maintheorem} and \ref{thm:maintheoremriesz} are obtained. However in the present paper the requirements on the order of differentiability $s$ and on the order of Bochner-Riesz means $\kappa$ 
are essentially lower. Moreover the method used in \cite{jotsaroop_riesz_2011} apparently does not yield the weak type $(1,1)$ in the multiplier theorem, nor the $L^1$-boundedness of the Bochner-Riesz means.

\section{Notation and preliminaries}

As above, let $\Space$ be $\R^{d_1}\times \R^{d_2}$ with Lebesgue measure. In order to study the Grushin operator $L$ on $\Space$, it is convenient to introduce at the same time a family of operators which commute with $L$. 

Given a point $x = (x',x'') \in \Space$, we denote by $x'_{\jone}$ and $x''_{\jtwo}$ the $\jone$-th component of $x'$ and the $\jtwo$-th component of $x''$.
For all $\jone \in \{1,\dots,d_1\}$ and $\jtwo \in \{1,\dots,d_2\}$, let then $L_{\jone}$, $T_{\jtwo}$, $P_{\jone}$ be the differential operators on $\Space$ given by
\[L_{\jone} = (-i\partial_{x'_{\jone}})^2 + (x'_{\jone})^2 \sum_{l=1}^{d_2} (-i\partial_{x''_l})^2, \qquad T_{\jtwo} = -i\partial_{x''_{\jtwo}}, \qquad P_{\jone} = x'_{\jone}.\]
If $(\dil_r)_{r > 0}$ is the family of dilations on $\Space$ defined by
\[\dil_r(x',x'') = (rx',r^2 x''),\]
then $\|f \circ \dil_r\|_2 = r^{-Q/2} \|f\|_2$, where $Q = d_1 + 2d_2$. We also note that 
\begin{equation}\label{eq:homogeneity}
\begin{split}
P_{\jone} (f \circ \dil_r) = r^{-1} (P_{\jone} f) \circ \dil_r, &\qquad L_{\jone}(f \circ \dil_r) = r^2 (L_{\jone} f) \circ \dil_r, \\ T_{\jtwo} (f \circ \dil_r) &= r^2 (T_{\jtwo} f) \circ \dil_r.
\end{split}
\end{equation}

The Grushin operator $L$ on $\Space$ is the sum $L_1 + \dots + L_{d_1}$. 
$L$ is a second-order subelliptic differential operator with smooth coefficients. For such operators, several ways of introducing a control distance $\dist$ are available in the literature, and we refer the reader to \cite{jerison_subelliptic_1987} for a survey. In particular, $L$ belongs to the class of operators studied in \cite{robinson_analysis_2008}, where the following estimates are obtained.

\begin{proposition}
The control distance $\dist$ of the Grushin operator $L$ on $\Space$ is homogeneous with respect to the dilations $\dil_r$, that is,
\[\dist(\dil_r(x),\dil_r(y)) = r \dist(x,y)\]
for all $r > 0$ and $x,y \in \Space$, and moreover
\begin{equation}\label{eq:controldistance}
\dist(x,y) \sim |x' - y'| + \begin{cases}
\frac{|x''-y''|}{|x'| + |y'|} &\text{if $|x''-y''|^{1/2} \leq |x'| + |y'|$,}\\
|x''-y''|^{1/2} &\text{if $|x''-y''|^{1/2} \geq |x'| + |y'|$.}
\end{cases}
\end{equation}
Consequently, if $B(x,r)$ denotes the $\dist$-ball of center $x \in \Space$ and radius $r \geq 0$, then
\begin{equation}\label{eq:ballmeasure}
|B(x,r)| \sim r^{d_1+d_2} \max\{r,|x'|\}^{d_2},
\end{equation}
and in particular, for all $\lambda \geq 0$,
\begin{equation}\label{eq:doubling}
|B(x,\lambda r)| \leq C (1+\lambda)^Q |B(x,r)|.
\end{equation}
Moreover, there exist constants $b,C > 0$ such that, for all $t > 0$, the integral kernel $p_t$ of the operator $\exp(-tL)$ is a function satisfying
\begin{equation}\label{eq:gaussianbounds}
|p_t(x,y)| \leq C |B(y,t^{1/2})|^{-1} e^{-b \dist(x,y)^2/t}
\end{equation}
for all $x,y \in \Space$.
\end{proposition}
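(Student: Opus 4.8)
The plan is to take $\dist$ to be the Carnot--Carath\'eodory distance associated with $L$, written as the sum of squares $L = -\sum_{\jone=1}^{d_1} X_{\jone}^2 - \sum_{\jone=1}^{d_1}\sum_{l=1}^{d_2} Y_{\jone,l}^2$ with $X_{\jone} = \partial_{x'_{\jone}}$ and $Y_{\jone,l} = x'_{\jone}\,\partial_{x''_l}$, and to establish the five assertions in the order listed, deducing the last three from the explicit formula \eqref{eq:controldistance}. Since the $X_{\jone}$ together with the brackets $[X_{\jone},Y_{\jone,l}] = \partial_{x''_l}$ span the tangent space at every point, H\"ormander's condition holds, so $\dist$ is finite and continuous. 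For the homogeneity, one computes directly that $X_{\jone}(f\circ\dil_r) = r^{-1}(X_{\jone}f)\circ\dil_r$ and, using $x'_{\jone}\mapsto r\,x'_{\jone}$ and $\partial_{x''_l}\mapsto r^{-2}\partial_{x''_l}$, also $Y_{\jone,l}(f\circ\dil_r) = r^{-1}(Y_{\jone,l}f)\circ\dil_r$; every generating field is thus multiplied by $r^{-1}$, so $\dil_r$ carries sub-unit curves to sub-unit curves up to the reparametrisation $t\mapsto rt$, and $\dist(\dil_r x,\dil_r y) = r\,\dist(x,y)$ follows at once.

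For the explicit formula \eqref{eq:controldistance} I would prove the two inequalities separately. For ``$\lesssim$'' I construct concatenations of elementary admissible curves: translating only in the $x'$ variables along the $X_{\jone}$ costs $\sim|x'-y'|$; translating $x''$ by $h := x''-y''$ from a configuration with $|x'|\sim\rho$ costs $\sim|h|/\rho$, since the $Y_{\jone,l}$ have ``speed'' $\sim|x'|$; and one may in addition first move outwards until $|x'|\sim|h|^{1/2}$ at cost $\sim|h|^{1/2}$ and then return, which is advantageous precisely when $|h|^{1/2}\ge|x'|+|y'|$. For ``$\gtrsim$'' I use that along any sub-unit curve $\gamma=(\gamma',\gamma'')$ one has the pointwise bounds $|\dot\gamma'|\le1$ and $|\dot\gamma''|\le|\gamma'|$: the first yields $|x'-y'|\lesssim\dist(x,y)$ and controls the oscillation of $|\gamma'|$ along the curve, and feeding this into the second bound and integrating gives the matching lower estimate in each of the two regimes. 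I expect this comparison with an explicit quasi-distance to be the main technical obstacle, since it rests on the delicate trade-off between moving in $x'$ to escape the degeneracy locus $\{x'=0\}$ and moving in $x''$ near it; it is, however, exactly the analysis carried out in \cite{robinson_analysis_2008} (see also the survey \cite{jerison_subelliptic_1987}), so \eqref{eq:controldistance} may alternatively be quoted verbatim.

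Granted \eqref{eq:controldistance}, the volume estimate \eqref{eq:ballmeasure} follows by integrating the indicator of $B(x,r)$: the constraint $\dist(x,y)<r$ forces $|x'-y'|\lesssim r$, an $x'$-set of measure $\sim r^{d_1}$ on which moreover $|y'|\sim\max\{r,|x'|\}$, and for each such $y'$ the admissible $y''$ fill a set of measure $\sim\min\{r(|x'|+|y'|),r^2\}^{d_2}\sim(r\max\{r,|x'|\})^{d_2}$, giving $|B(x,r)|\sim r^{d_1+d_2}\max\{r,|x'|\}^{d_2}$. The doubling bound \eqref{eq:doubling} is then elementary: for $\lambda\ge1$ one has $\max\{\lambda r,|x'|\}\le\lambda\max\{r,|x'|\}$, whence $|B(x,\lambda r)|/|B(x,r)|\le\lambda^{d_1+2d_2}=\lambda^Q$, while for $0\le\lambda\le1$ the ratio is at most $1$; in all cases the bound is $\le C(1+\lambda)^Q$. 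Finally, for the Gaussian bounds \eqref{eq:gaussianbounds} I would invoke the general theory of H\"ormander sums of squares: on the doubling space $(\Space,\dist,\mathrm{Leb})$ the scale-invariant $L^2$ Poincar\'e inequality on $\dist$-balls holds, and the equivalence between ``doubling $+$ Poincar\'e'' and two-sided Gaussian heat kernel estimates (Grigor'yan, Saloff-Coste) yields \eqref{eq:gaussianbounds} a fortiori; most directly, this is one of the estimates established in \cite{robinson_analysis_2008} for the class of operators to which $L$ belongs, and can be cited as it stands.
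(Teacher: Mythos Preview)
Your proposal is correct and aligns with the paper's approach: the paper's proof consists entirely of citing \cite{robinson_analysis_2008} (the homogeneity from the definition there together with the homogeneity of $L$, and the remaining estimates from Proposition~5.1 and Corollary~6.6 of that reference), and you both invoke that reference and sketch the underlying arguments it contains. Your outline is sound---the only minor imprecision is the claim ``$|y'|\sim\max\{r,|x'|\}$'' in the ball-volume step, which should read $\max\{r,|x'|+|y'|\}\sim\max\{r,|x'|\}$, but this does not affect the conclusion.
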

\begin{proof}
The homogeneity of $\dist$ follows immediately from its definition \cite[\S 4]{robinson_analysis_2008} and the homogeneity of $L$. For the remaining estimates, see \cite[Proposition 5.1 and Corollary 6.6]{robinson_analysis_2008}.
\end{proof}

The inequality \eqref{eq:doubling} says that $\Space$ with the distance $\dist$ and the Lebesgue measure is a doubling metric-measure space of homogeneous dimension $Q$ (cf.\ \cite[\S 2]{duong_singular_1999} or \cite[formula (2.2)]{duong_plancherel-type_2002}), whereas \eqref{eq:gaussianbounds} expresses Gaussian-type heat kernel bounds for $L$.

Several properties of $L$ and the other operators introduced above can be easily recovered by considering $\Space$ as the quotient of a suitable stratified Lie group (cf.\ \cite{beals_transversally_2004,arcozzi_grushin_2008}). Denote by $\R^{d_1 \times d_2}$ the set of $(d_1 \times d_2)$-matrices with real coefficients. Both $\R^{d_1 \times d_2}$ and $\R^{d_1} \times \R^{d_2}$ are abelian Lie groups with respect to addition. Let $\Heis_{d_1,d_2}$ be the semidirect product group $\R^{d_1 \times d_2} \ltimes (\R^{d_1} \times \R^{d_2})$, with multiplication
\[(x,y,t) \cdot (x_0, y_0, t_0) = (x+ x_0, y+ y_0, t+ t_0 - (x^T y_0 - x_0^T y)/2).\]
This is a particular instance of Heisenberg-Reiter group (see \cite{torreslopera_cohomology_1985} and references therein). If $\{\tilde X_{1,1},\dots,\tilde X_{d_1,d_2}, \tilde Y_{1}, \dots,\tilde Y_{d_1}, \tilde T_{1}, \dots, \tilde T_{d_2}\}$ is the standard basis of the Lie algebra of $\Heis_{d_1,d_2}$ (i.e., the set of the left-invariant vector fields extending the standard basis of $\R^{d_1 \times d_2} \times \R^{d_1} \times \R^{d_2}$ at the identity), then the only non-trivial Lie brackets among the elements of the basis are
\[[\tilde X_{\jone,\jtwo},\tilde Y_{\jone}] = -[\tilde Y_{\jone},\tilde X_{\jone,\jtwo}] = -\tilde T_{\jtwo} \qquad\text{for all $\jone = 1,\dots,d_1$, $\jtwo = 1,\dots,d_2$.}\]
$\Heis_{d_1,d_2}$ is a $2$-step stratified Lie group, with dilations $(\tilde\dil_r)_{r>0}$ defined by
\[\tilde\dil_r(\tilde X_{\jone,\jtwo}) = r\tilde X_{\jone,\jtwo}, \quad \tilde\dil_r(\tilde Y_{\jone}) = r\tilde Y_{\jone}, \quad \tilde\dil_r(\tilde T_{\jtwo}) = r^2 \tilde T_{\jtwo},\]
and the homogeneous sublaplacian $\tilde L$ on $\Heis_{d_1,d_2}$ is given by
\[\tilde L = -\sum_{\jone,\jtwo} \tilde X_{\jone,\jtwo}^2 - \sum_{\jone} \tilde Y_{\jone}^2.\]

Note that, when $d_2=1$, $\Heis_{d_1,d_2}$ is the $(2d_1+1)$-dimensional Heisenberg group. When $d_2 > 1$, $\Heis_{d_1,d_2}$ is not an H-type group (in the sense of Kaplan), nor a M\'etivier group. Nevertheless, in the terminology of \cite{martini_multipliers_2010,martini_joint}, $\Heis_{d_1,d_2}$ is $h$-capacious where $h= \min\{d_1,d_2\}$. In particular, the following multiplier theorem holds: the operator $F(\tilde L)$ is of weak type $(1,1)$ and bounded on $L^p(\Heis_{d_1,d_1})$ for all $p \in \leftopenint 1,\infty\rightopenint$ whenever $\|F\|_{MW_2^s} < \infty$ for some $s > (\dim \Heis_{d_1,d_2} + (d_2-d_1)_+)/2$, where $\dim \Heis_{d_1,d_2}$ is the topological dimension $d_1d_2 + d_1 +d_2$ \cite[Corollary~6.1]{martini_joint}.

$\Space$ can be identified with the left quotient $\R^{d_1 \times d_2} \backslash \Heis_{d_1,d_2}$ via the projection map $(x,y,t) \mapsto (y,t+x^T y/2)$. Hence $\Heis_{d_1,d_2}$ acts by right translations on $\Space$, that is,
\[\tau_{(x,y,t)} : \Space \ni (z',z'') \mapsto (z'-y,z''-x^T z' -t + x^T y/2) \in \Space\]
is a measure-preserving affine transformation of $\Space$ for all $(x,y,t) \in \Heis_{d_1,d_2}$, and $\tau_{g h} = \tau_g \tau_h$. This in turn induces a unitary representation $\sigma$ of $\Heis_{d_1,d_2}$ on $L^2(\Space)$, given by $\sigma(g) f = f \circ \tau_g^{-1}$, and
\begin{equation}\label{eq:representation}
\begin{split}
T_{\jtwo} = d\sigma(-i\tilde T_{\jtwo}), &\qquad P_{\jone} T_{\jtwo} = d\sigma(-i\tilde X_{\jone,\jtwo}), \\
L_{\jone} = d\sigma\Biggl(-\tilde Y_{\jone}^2 - &\sum_{\jtwo} \tilde X_{\jone,\jtwo}^2 \Biggr), \qquad L = d\sigma(\tilde L).
\end{split}
\end{equation}
This shows in particular that the operators $L_1,\dots,L_{d_1},T_1,\dots,T_{d_2}$ (and all the polynomials in $L_1,\dots,L_{d_1},T_1,\dots,T_{d_2}$) are essentially self-adjoint on $C^\infty_c(\Space)$ and commute strongly (that is, their spectral resolutions commute), so they admit a joint functional calculus on $L^2(\Space)$ in the sense of the spectral theorem \cite[\S 3.1]{martini_spectral_2011}. Arguing analogously, by the use of the unitary representation $\varpi$ of $\R^{d_1} \times \R^{d_2}$ on $L^2(\Space)$ given by
\[(\varpi(u',u'') f)(x',x'') = e^{i \langle x' , u' \rangle} f(x', x''+u''),\]
one obtains that the operators $P_1,\dots,P_{d_1},T_1,\dots,T_{d_2}$ are essentially self-adjoint on $C^\infty_c(\Space)$ and commute strongly.

Because of the mentioned commutation properties, it is convenient to introduce in our notation the following ``vectors of operators'':
\[\vecL = (L_1,\dots,L_{d_1}), \qquad \vecT = (T_1,\dots,T_{d_2}), \qquad \vecP = (P_1,\dots,P_{d_1}).\]
Thus, for instance, $|\vecT|$ stands for the operator $(|T_1|^2 + \dots + |T_{d_2}|^2)^{1/2}$, that is, the square root $(-\Delta_{x''})^{1/2}$ of minus the second partial Laplacian on $\R^{d_1} \times \R^{d_2}$, while $|\vecP|$ is the operator of multiplication by $|x'|$. The subellipticity of $\tilde L$ then yields the following estimate.

\begin{proposition}
For all $\gamma \in \leftclosedint 0,\infty\rightopenint$ and $f \in L^2(\Space)$,
\begin{equation}\label{eq:fractionalpowers2}
\|\, |\vecP|^{\gamma} f\|_2 \leq C_\gamma \|L^{\gamma/2} |\vecT|^{-\gamma} f\|_2,
\end{equation}
where the $L^2$ norm on each side of \eqref{eq:fractionalpowers2} is understood to be $+\infty$ when $f$ does not belong to the domain of the corresponding operator.
\end{proposition}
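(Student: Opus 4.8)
The plan is to deduce \eqref{eq:fractionalpowers2} from the inequality of quadratic forms
\[
\langle |\vecP|^{2\gamma} \, |\vecT|^{2\gamma} g, g \rangle \leq C_\gamma \, \langle L^{\gamma} g, g \rangle \qquad (g \in \dom L^{\gamma/2})
\]
for every $\gamma \geq 0$, and to establish the latter by transferring to $\Space$ a subelliptic estimate for the homogeneous sublaplacian $\tilde L$ on $\Heis_{d_1,d_2}$.

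For the first reduction, note that $|\vecP| = |x'|$ and $|\vecT| = (-\Delta_{x''})^{1/2}$ commute strongly, so $|\vecP|^{2\gamma}|\vecT|^{2\gamma}$ and $|\vecP|^{\gamma}|\vecT|^{\gamma}$ are nonnegative self-adjoint operators with $(|\vecP|^{2\gamma}|\vecT|^{2\gamma})^{1/2} = |\vecP|^{\gamma}|\vecT|^{\gamma}$. If the right-hand side of \eqref{eq:fractionalpowers2} is finite, then $f$ lies in the domain of $|\vecT|^{-\gamma}$ (note that $|\vecT|$ has trivial kernel), and $g := |\vecT|^{-\gamma} f$ satisfies $|\vecT|^\gamma g = f$ and $g \in \dom L^{\gamma/2}$; the form inequality then gives
\[
\| \, |\vecP|^{\gamma} f \|_2^2 = \langle |\vecP|^{2\gamma}|\vecT|^{2\gamma} g, g \rangle \leq C_\gamma \| L^{\gamma/2} g \|_2^2 = C_\gamma \| L^{\gamma/2} |\vecT|^{-\gamma} f \|_2^2,
\]
which is \eqref{eq:fractionalpowers2}; the convention on $+\infty$ covers the remaining cases.

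To prove the form inequality, I would first treat integer exponents. By \eqref{eq:representation}, $L = d\sigma(\tilde L)$, and since $P_\jone T_\jtwo = d\sigma(-i\tilde X_{\jone,\jtwo})$ and the $P_\jone,T_\jtwo$ commute, also $|\vecP|^2|\vecT|^2 = \sum_{\jone,\jtwo}(P_\jone T_\jtwo)^2 = d\sigma(S)$, where $S := -\sum_{\jone,\jtwo}\tilde X_{\jone,\jtwo}^2$ is a left-invariant differential operator on $\Heis_{d_1,d_2}$, homogeneous of degree $2$ with respect to $(\tilde\dil_r)_{r>0}$. By the classical subelliptic estimates for the homogeneous sublaplacian $\tilde L$, for every positive integer $n$ the operator $S^n \tilde L^{-n}$ extends to a bounded operator on $L^2(\Heis_{d_1,d_2})$ (it is convolution with a Calder\'on--Zygmund kernel, homogeneous of the critical degree and smooth away from the identity). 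Being built from the left-invariant operators $\tilde L$ and $S$, this operator descends to the left quotient $\R^{d_1 \times d_2} \backslash \Heis_{d_1,d_2} \cong \Space$, where (by \eqref{eq:representation} and the compatibility of $d\sigma$ with products and functional calculus) it equals $(|\vecP|^2|\vecT|^2)^n L^{-n}$; hence the latter is bounded on $L^2(\Space)$, which is equivalent to the form inequality $(|\vecP|^2|\vecT|^2)^{2n} \leq C_n L^{2n}$.

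Now, given an arbitrary $\gamma \geq 0$, choose a positive integer $n$ with $2n \geq \gamma$ and apply the L\"owner--Heinz inequality (operator monotonicity of $t \mapsto t^{\gamma/(2n)}$ on $\leftclosedint 0,\infty\rightopenint$, valid for unbounded nonnegative self-adjoint operators) to $(|\vecP|^2|\vecT|^2)^{2n} \leq C_n L^{2n}$: raising both sides to the power $\gamma/(2n) \in \leftclosedint 0,1\rightclosedint$ yields $|\vecP|^{2\gamma}|\vecT|^{2\gamma} = \bigl((|\vecP|^2|\vecT|^2)^{2n}\bigr)^{\gamma/(2n)} \leq C_n^{\gamma/(2n)} L^{\gamma}$, the desired form inequality.

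I expect the main technical point to be the boundedness of $S^n \tilde L^{-n}$ on $L^2(\Heis_{d_1,d_2})$: the vector fields $\tilde X_{\jone,\jtwo}$ commute with one another, so $S$ alone does not satisfy H\"ormander's condition, and the homogeneity of $\tilde L$ must be exploited (through the structure of homogeneous distributions of critical degree on stratified groups) in order to control $S^n$ by $\tilde L^n$; the descent to the quotient is then routine. An alternative, self-contained route avoids $\Heis_{d_1,d_2}$ entirely: diagonalising $|\vecT|$ by the partial Fourier transform in $x''$ and rescaling in $x'$ reduces \eqref{eq:fractionalpowers2} to the operator inequality $|z|^{2\gamma} \leq C_\gamma(-\Delta + |z|^2)^\gamma$ on $L^2(\R^{d_1})$, where the range $\gamma \in \leftclosedint 0,1\rightclosedint$ is immediate from $|z|^2 \leq -\Delta + |z|^2$ and L\"owner--Heinz, the integer cases follow from the creation/annihilation operator calculus for the harmonic oscillator, and the passage to fractional $\gamma$ is again L\"owner--Heinz.
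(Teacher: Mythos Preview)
Your proof is correct and follows the same overall architecture as the paper's: reduce \eqref{eq:fractionalpowers2} to the inequality $\|\,|\vecP\vecT|^{\gamma}g\|_{2}\le C_{\gamma}\|L^{\gamma/2}g\|_{2}$, establish the even--integer case by transferring the subelliptic estimate $\|\tilde W^{n}u\|_{2}\le C_{n}\|\tilde L^{n}u\|_{2}$ from $\Heis_{d_1,d_2}$ to $\Space$ via the representation $\sigma$, and then pass to real $\gamma$. The genuine difference is in this last step. The paper uses complex interpolation of the domain scales $\dom A^{\theta}$ and $\dom B^{\theta}$, which only yields the graph--norm estimate $\|\,|\vecP\vecT|^{\gamma}g\|_{2}\le C_{\gamma}(\|g\|_{2}+\|L^{\gamma/2}g\|_{2})$; the lower--order term is then removed by the dilation trick ($g\mapsto g\circ\dil_{r}$, $r\to\infty$). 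Your route---rewriting the integer case as the operator inequality $(|\vecP\vecT|^{2})^{2n}\le C_{n}L^{2n}$ and applying L\"owner--Heinz with exponent $\gamma/(2n)\in[0,1]$---gives the homogeneous inequality directly, with no extraneous $\|g\|_{2}$ term to clean up. This is a little slicker; what you lose is that the paper's interpolation argument is closer to the standard machinery and would generalise more easily if $|\vecP\vecT|^{2}$ were replaced by an operator not known to commute with itself in the sense needed to take powers. Two caveats worth tightening in a final write--up: the phrase ``$S^{n}\tilde L^{-n}$ is convolution with a Calder\'on--Zygmund kernel'' needs care for large $n$ (better to quote the $L^{2}$ subelliptic estimate for homogeneous operators on stratified groups directly), and ``descends to the quotient'' is really a transference argument (boundedness of $\sigma(K)$ follows from boundedness of right convolution by $K$ on the amenable group $\Heis_{d_1,d_2}$), not a literal restriction. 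Your alternative route through the partial Fourier transform and the harmonic oscillator inequality $|z|^{2\gamma}\le C_{\gamma}(-\Delta+|z|^{2})^{\gamma}$ on $L^{2}(\R^{d_1})$ is genuinely different from the paper's and more self--contained, avoiding the Heisenberg--Reiter group entirely.
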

\begin{proof}
We may assume $\gamma > 0$. Let $\vecP\vecT$ denote the double-indexed vector of operators $(P_{\jone} T_{\jtwo})_{\jone,\jtwo}$, and note that $|\vecP \vecT|^{\gamma} = |\vecP|^{\gamma} |\vecT|^{\gamma}$ (modulo closures). Moreover the spectrum $\leftclosedint 0,+\infty\rightopenint$ of $|\vecT|^{\gamma}$ is purely continuous, so $|\vecT|^{\gamma}$ is injective and its image is dense in $L^2(\Space)$. Therefore \eqref{eq:fractionalpowers2} is reduced to the proof of the inequality
\begin{equation}\label{eq:fractionalpowers}
\|\, |\vecP \vecT|^{\gamma} g\|_2 \leq C_\gamma \|L^{\gamma/2} g\|_2
\end{equation}
for all $g \in L^2(\Space)$.

By \eqref{eq:representation}, the differential operator $\tilde W = -\sum_{\jone,\jtwo} \tilde X_{\jone,\jtwo}^2$ on $\Heis_{d_1,d_2}$ corresponds to the operator $|\vecP \vecT|^2$ on $\Space$. Since $\tilde W$ is $\tilde\dil_r$-homogeneous, with the same homogeneity degree as the sublaplacian $\tilde L$, from \eqref{eq:representation} and \cite[Theorem 2.3(iv)]{martini_spectral_2011} we deduce \eqref{eq:fractionalpowers}
for all $\gamma \in 2\N$ and $g \in L^2(\Space)$.

We want now to extend \eqref{eq:fractionalpowers} to all the real $\gamma \geq 0$. For this, fix $m \in \N$ and let $A$ and $B$ be the closures of $|\vecP\vecT|^{2m}$ and $L^m$ on $L^2(\Space)$ respectively. Since $A$ and $B$ are nonnegative self-adjoint operators on $L^2(\Space)$, by \cite[Theorem 11.6.1]{martinezcarracedo_theory_2001}, for all $\theta \in \leftopenint 0,1 \rightopenint$,
\[(L^2(\Space),\dom A)_{[\theta]} = \dom A^\theta, \qquad (L^2(\Space),\dom B)_{[\theta]} = \dom B^\theta,\]
with equivalent norms, where $(\cdot,\cdot)_{[\theta]}$ denotes interpolation with respect to the complex method, and the domains of the various operators are endowed with the graph norms. On the other hand, \eqref{eq:fractionalpowers} implies that $\dom B \subseteq \dom A$, with continuous inclusion. By interpolation \cite[Theorem~4.1.2]{bergh_interpolation_1976}, we conclude that $\dom B^\theta \subseteq \dom A^\theta$, with continuous inclusion. This implies that
\[
\|\, |\vecP \vecT|^{\gamma} g\|_2 \leq C_\gamma (\|g\|_2 + \|L^{\gamma/2} g\|_2)
\]
for all $\gamma \in \leftclosedint 0,2m \rightclosedint$ and $g \in L^2(\Space)$. The bound \eqref{eq:fractionalpowers} then follows by replacing $f$ with $f \circ \dil_r$ in the previous inequality, exploiting the homogeneity relations \eqref{eq:homogeneity}, and taking the limit for $r \to \infty$.
\end{proof}

\section{Plancherel estimates}

From the previous section we know that the operators $L_1,\dots,L_{d_1},T_1,\dots,T_{d_2}$ have a joint functional calculus. In fact one can obtain a quite explicit formula for the integral kernel $\Kern_{G(\vecL,\vecT)}$ of an operator $G(\vecL,\vecT)$ in the functional calculus, in terms of the Hermite functions (cf.\ \cite[Proposition 3.1]{meyer_estimates_2006} for the case $d_1=d_2=1$, and \cite{strichartz_harmonic_1991} for the analogue on the Heisenberg groups). Namely, for all $\ell \in \N$, let $h_\ell$ denote the $\ell$-th Hermite function, that is,
\[
h_\ell(t) = (-1)^\ell (\ell! \, 2^\ell \sqrt{\pi})^{-1/2} e^{t^2/2} \left(\frac{d}{dt}\right)^\ell e^{-t^2},
\]
and set, for all $n \in \N^{d_1}$, $u \in \R^{d_1}$, $\xi \in \R^{d_2}$,
\[\tilde h_n(u,\xi) = |\xi|^{d_1/4} h_{n_1}(|\xi|^{1/2} u_1) \cdots h_{n_{d_1}}(|\xi|^{1/2} u_{d_1}).\]

\begin{proposition}
For all bounded Borel functions $G : \R^{d_1} \times \R^{d_2} \to \C$ compactly supported in $\R^{d_1} \times (\R^{d_2} \setminus \{0\})$,
\begin{multline*}
\Kern_{G(\vecL,\vecT)}(x,y) \\
= (2\pi)^{-d_2} \int_{\R^{d_2}} \sum_{n \in \N^{d_1}} G(|\xi|(2n+\vecONE), \xi) \, \tilde h_n(y',\xi) \, \tilde h_n(x',\xi) \, e^{i \langle \xi , x''-y'' \rangle} \,d\xi
\end{multline*}
for almost all $x,y \in \Space$, where $\vecONE = (1,\dots,1) \in \N^{d_1}$. In particular
\begin{equation}\label{eq:jointplancherel}
\|\Kern_{G(\vecL,\vecT)}(\cdot,y)\|_2^2 = (2\pi)^{-d_2} \int_{\R^{d_2}} \sum_{n \in \N^{d_1}} |G(|\xi|(2n+\vecONE),\xi)|^2 \, \tilde h_n^2(y',\xi) \,d\xi
\end{equation}
for almost all $y \in \Space$.
\end{proposition}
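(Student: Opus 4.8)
The plan is to diagonalise the commuting tuple $(\vecL,\vecT)$ by combining a partial Fourier transform in the $x''$ variables with the Hermite expansion. Let $\mathcal{F}$ be the unitary partial Fourier transform on $L^2(\Space)$ acting on the second group of variables. Then $\mathcal{F}$ conjugates each $T_\jtwo$ into multiplication by the coordinate $\xi_\jtwo$, and each $L_\jone$ into the decomposable operator on $\int^\oplus_{\R^{d_2}}L^2(\R^{d_1})\,d\xi \cong L^2(\R^{d_2};L^2(\R^{d_1}))$ whose fibre at $\xi \in \R^{d_2}\setminus\{0\}$ is $\mathcal{H}_\jone(\xi)=-\partial_{x'_\jone}^2+|\xi|^2(x'_\jone)^2$ on $L^2(\R^{d_1}_{x'})$.

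I would then invoke one-dimensional Hermite theory: $-\partial_t^2+t^2$ has the orthonormal basis of eigenfunctions $(h_\ell)_{\ell\in\N}$ with eigenvalues $2\ell+1$. The rescaling $t\mapsto|\xi|^{1/2}t$ shows that, for each fixed $\xi\neq0$, the operators $\mathcal{H}_1(\xi),\dots,\mathcal{H}_{d_1}(\xi)$ commute and share the orthonormal basis $\{\tilde h_n(\cdot,\xi)\}_{n\in\N^{d_1}}$ of $L^2(\R^{d_1})$ as joint eigenfunctions, with joint eigenvalue $|\xi|(2n+\vecONE)$; here the factor $|\xi|^{d_1/4}$ in the definition of $\tilde h_n$ is exactly the $L^2$-normalisation, and $\xi\mapsto\tilde h_n(\cdot,\xi)$ is continuous, hence measurable, into $L^2(\R^{d_1})$. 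Consequently the joint spectral resolution of $(\vecL,\vecT)$ is transported by $\mathcal{F}$ into the direct integral over $\xi$ of the joint spectral resolutions of the tuples $(\mathcal{H}_\jone(\xi))_\jone$, and applying the joint functional calculus gives
\[
\mathcal{F}\,G(\vecL,\vecT)\,\mathcal{F}^{-1} \;=\; \int^\oplus_{\R^{d_2}}\Bigl(\,\sum_{n\in\N^{d_1}}G(|\xi|(2n+\vecONE),\xi)\,\langle\,\cdot\,,\tilde h_n(\cdot,\xi)\rangle\,\tilde h_n(\cdot,\xi)\Bigr)\,d\xi .
\]
At this point the hypothesis on $G$ enters decisively: being bounded and compactly supported in $\R^{d_1}\times(\R^{d_2}\setminus\{0\})$ forces $\xi$ to range over a compact subset of $\R^{d_2}\setminus\{0\}$, and since $|\xi|(2n_\jone+1)$ is then bounded, the sum over $n$ is finite, uniformly in $\xi$; on such a region $\tilde h_n(x',\xi)$ is moreover bounded with Gaussian decay in $x'$, uniformly in $\xi$.

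Reading off the kernel is then routine. Undoing $\mathcal{F}$ and unravelling the direct integral expresses $G(\vecL,\vecT)f(x)$ as an absolutely convergent iterated integral against $f$, and the two symmetric $(2\pi)^{-d_2/2}$ factors combine to the prefactor $(2\pi)^{-d_2}$; the interchange of $\sum_n$, $\int d\xi$ and the pairing with $f$ is legitimate by the finiteness and the Gaussian decay just noted, together with Fubini's theorem. The resulting kernel is the displayed expression, and it is in fact a bounded continuous function of $(x,y)$, so it certainly represents $G(\vecL,\vecT)$ in the almost-everywhere sense. Finally, for \eqref{eq:jointplancherel}, fix $y$: the kernel, viewed as a function of $x''$, is $(2\pi)^{-d_2/2}$ times the inverse Fourier transform of $\xi\mapsto\sum_nG(|\xi|(2n+\vecONE),\xi)\,\tilde h_n(y',\xi)\,\tilde h_n(\cdot,\xi)\,e^{-i\langle\xi,y''\rangle}$; applying Plancherel in $x''$ and then, for fixed $\xi$, Plancherel in $x'$ together with the orthonormality of $\{\tilde h_n(\cdot,\xi)\}_n$ collapses the double sum over $n$ to the single sum in \eqref{eq:jointplancherel}.

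I expect the main obstacle to be the bookkeeping in the second step: rigorously identifying the joint functional calculus of $(\vecL,\vecT)$ with the fibrewise-diagonal operator on the direct integral. One must check measurability of the fields $\xi\mapsto\tilde h_n(\cdot,\xi)$ of eigenvectors and argue, e.g.\ via Stone--Weierstrass and a monotone-class argument starting from heat or resolvent operators (for which the decomposition is transparent), that the resulting identity propagates to the full Borel functional calculus. The compact-support-away-from-the-origin hypothesis on $G$ is precisely what keeps every sum finite and every kernel an honest function, thereby bypassing the delicate accumulation of the spectrum of $L$ near $0$.
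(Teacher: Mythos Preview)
Your proposal is correct and follows essentially the same route as the paper: partial Fourier transform in $x''$ followed by the (rescaled) Hermite expansion in $x'$, which diagonalises $(\vecL,\vecT)$ and yields both the kernel formula and the Plancherel identity. The only cosmetic difference is that the paper packages the Hermite step as an explicit isometry $\mathcal{G}:L^2(\R^{d_1}\times\R^{d_2})\to L^2(\N^{d_1}\times\R^{d_2})$ rather than using direct-integral language, and then derives \eqref{eq:jointplancherel} in one stroke from the fact that $\mathcal{G}\mathcal{F}$ is an isometry.
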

\begin{proof}
Let $\mathcal{F} : L^2(\R^{d_1} \times \R^{d_2}) \to L^2(\R^{d_1} \times \R^{d_2})$ be the isometry defined by
\[
\mathcal{F} \phi(x',\xi) = (2\pi)^{-d_2/2} \int_{\R^{d_2}} \phi(x',x'') \, e^{-i \langle \xi ,  x'' \rangle} \, dx'',
\]
i.e., the Fourier transform with respect to $x''$. Then
\begin{gather*}
\mathcal{F} L_{\jone}\phi (x',\xi) = L_{\jone,\xi} \, \mathcal{F} \phi(x',\xi), \qquad \mathcal{F} T_{\jtwo}\phi (x',\xi) = \xi_{\jtwo} \, \mathcal{F} \phi(x',\xi),
\end{gather*}
at least for $\phi$ in the Schwartz class, where
\[L_{\jone,\xi} =  (-i\partial_{x'_{\jone}})^2 +  |\xi|^2 (x'_{\jone})^2.\]
For all $\xi \neq 0$, $\{\tilde h_n(\cdot,\xi)\}_{n \in \N^{d_1}}$ is a complete orthonormal system for $L^2(\R^{d_1})$ made of real-valued functions and
\[L_{\jone,\xi} \, \tilde h_n(x',\xi) = (2n_{\jone}+1) |\xi| \, \tilde h_n(x',\xi).\]
In particular, if $\mathcal{G} : L^2(\R^{d_1} \times \R^{d_2}) \to L^2(\N^{d_1} \times \R^{d_2})$ is the isometry defined by
\[\mathcal{G} \psi(n,\xi) = \int_{\R^{d_1}} \psi(x',\xi) \, \tilde h_n(x',\xi) \,dx',\]
then
\[
\mathcal{G} \mathcal{F} L_{\jone} \phi(n,\xi) = (2n_{\jone}+1) |\xi| \, \mathcal{G} \mathcal{F} \phi(n,\xi), \qquad \mathcal{G} \mathcal{F} T_{\jtwo} \phi(n,\xi) = \xi_{\jtwo} \, \mathcal{G} \mathcal{F} \phi(n,\xi).
\]
The isometry $\mathcal{G}\mathcal{F}$ intertwines the operators $L_{\jone}$ and $T_{\jtwo}$ with some multiplication operators on $\N^{d_1} \times \R^{d_2}$, hence it intertwines the corresponding functional calculi:
\[
\mathcal{G} \mathcal{F} \, G(\vecL,\vecT) \, \phi(n,\xi) = G(|\xi|(2n+\vecONE),\xi) \, \mathcal{G} \mathcal{F} \phi(n,\xi).
\]
The inversion formulae for $\mathcal{F}$ and $\mathcal{G}$ and some easy manipulations then give the above expression for $\Kern_{G(\vecL,\vecT)}$. Moreover, if we set
\[
G_{y}(n,\xi) = (2\pi)^{-d_2/2} G(|\xi|(2n+\vecONE),\xi) \, \tilde h_n(y',\xi) \, e^{-i \langle \xi ,  y'' \rangle},
\]
then the formula for $\Kern_{G(\vecL,\vecT)}$ can be rewritten as
\[
\Kern_{G(\vecL,\vecT)}(\cdot,y) = (\mathcal{G}\mathcal{F})^{-1} G_{y},
\]
and since $\mathcal{G}\mathcal{F} : L^2(\Space) \to L^2(\N^{d_1} \times \R^{d_2})$ is an isometry we obtain \eqref{eq:jointplancherel}.
\end{proof}

If we restrict to the joint functional calculus of $L,T_1,\dots,T_{d_2}$, the formula \eqref{eq:jointplancherel} can be rewritten as follows. For all positive integers $d$, set $\N_{d} = 2\N + d$, and define, for all $N \in \N_{d}$ and $u \in \R^{d}$,
\[H_{d,N}(u) = \sum_{\substack{n_1,\dots,n_{d} \in \N \\ 2n_1 + \dots + 2 n_{d} + d = N}} h^2_{n_1}(u_1) \cdots h^2_{n_{d}}(u_{d}).\]

\begin{coro}\label{cor:plancherel}
For all bounded Borel functions $G : \R \times \R^{d_2} \to \C$ compactly supported in $\R \times (\R^{d_2} \setminus \{0\})$,
\begin{equation}\label{eq:plancherel}
\|\Kern_{G(L,\vecT)}(\cdot,y)\|_2^2 = (2\pi)^{-d_2} \int_{\R^{d_2}} \sum_{N \in \N_{d_1}} |G(N|\xi|,\xi)|^2 \, |\xi|^{d_1/2} H_{d_1,N}(|\xi|^{1/2} y') \,d\xi
\end{equation}
for almost all $y \in \Space$.
\end{coro}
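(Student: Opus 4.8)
The plan is to derive \eqref{eq:plancherel} from the preceding Proposition by a change of function. Since $L = L_1 + \dots + L_{d_1}$ and $L_1,\dots,L_{d_1},T_1,\dots,T_{d_2}$ admit a joint functional calculus, one has $G(L,\vecT) = \tilde G(\vecL,\vecT)$, where $\tilde G(v,\xi) = G(v_1 + \dots + v_{d_1},\xi)$ for $v \in \R^{d_1}$ and $\xi \in \R^{d_2}$; this is the standard compatibility of the joint functional calculus of $(\vecL,\vecT)$ with that of $(L,\vecT)$ under the map $(v,\xi)\mapsto(v_1+\dots+v_{d_1},\xi)$. So the first task is to reduce to a situation where the preceding Proposition can be applied to $\tilde G$.

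The step requiring genuine attention is exactly this reduction, since $\tilde G$ need not be compactly supported. Here I would use that each $L_\jone$ is nonnegative, so that the joint spectrum of $(\vecL,\vecT)$ is contained in $\leftclosedint 0,\infty\rightopenint^{d_1} \times \R^{d_2}$; in fact it is the set of the points $(|\xi|(2n+\vecONE),\xi)$ with $n \in \N^{d_1}$ and $\xi \in \R^{d_2}$. Because $G$ is compactly supported in $\R \times (\R^{d_2}\setminus\{0\})$, the condition $G(\lambda,\xi)\neq 0$ forces $|\xi|$ into some compact subinterval of $\leftopenint 0,\infty\rightopenint$ and $\lambda$ to remain bounded, hence --- on the joint spectrum --- forces each coordinate $v_\jone$ of $v$ into a fixed compact subinterval of $\leftopenint 0,\infty\rightopenint$. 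Multiplying $\tilde G$ by a suitable product of smooth compactly supported cutoffs in $\xi$ and in each $v_\jone$ thus yields a bounded Borel function $\tilde G_0$, compactly supported in $\R^{d_1}\times(\R^{d_2}\setminus\{0\})$, that agrees with $\tilde G$ on the joint spectrum; consequently $\tilde G_0(\vecL,\vecT) = \tilde G(\vecL,\vecT) = G(L,\vecT)$, and the preceding Proposition applies to $\tilde G_0$.

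It then remains only to rewrite the right-hand side of \eqref{eq:jointplancherel} for $\tilde G_0$. On the joint spectrum one has $\tilde G_0(|\xi|(2n+\vecONE),\xi) = G\bigl((2(n_1+\dots+n_{d_1})+d_1)|\xi|,\xi\bigr) = G(N|\xi|,\xi)$ with $N = 2(n_1+\dots+n_{d_1})+d_1 \in \N_{d_1}$, while $\tilde h_n^2(y',\xi) = |\xi|^{d_1/2}\,h_{n_1}^2(|\xi|^{1/2}y'_1)\cdots h_{n_{d_1}}^2(|\xi|^{1/2}y'_{d_1})$ by definition. All terms being nonnegative, the series over $n \in \N^{d_1}$ may be regrouped according to the value of $N$; using that, for fixed $N \in \N_{d_1}$, the inner sum $\sum_{2(n_1+\dots+n_{d_1})+d_1=N} h_{n_1}^2(|\xi|^{1/2}y'_1)\cdots h_{n_{d_1}}^2(|\xi|^{1/2}y'_{d_1})$ equals $H_{d_1,N}(|\xi|^{1/2}y')$ by the very definition of $H_{d_1,N}$, one arrives at \eqref{eq:plancherel}. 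Beyond the support reduction above, this last part is routine bookkeeping, the only substantive input being the already established Proposition.
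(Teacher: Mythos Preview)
Your argument is correct and follows the same route as the paper, which presents the result as an immediate corollary of \eqref{eq:jointplancherel} (without proof) obtained by regrouping the sum over $n \in \N^{d_1}$ according to $N = 2|n| + d_1$. Your explicit handling of the compact-support issue for $\tilde G$ via a cutoff $\tilde G_0$ agreeing with $\tilde G$ on the joint spectrum is a point the paper leaves implicit, and it is a welcome clarification.
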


We can now combine \eqref{eq:fractionalpowers2} and \eqref{eq:plancherel} to get the following weighted inequalities.

\begin{proposition}
For all $\gamma \geq 0$ and for all compactly supported bounded Borel functions $F : \R \to \C$,
\begin{equation}\label{eq:roughweightedplancherel}
\|\,|\vecP|^\gamma \Kern_{F(L)}(\cdot,y)\|_2^2 \leq C_\gamma \int_0^\infty |F(\lambda)|^2 \sum_{N \in \N_{d_1}} \frac{\lambda^{Q/2-\gamma}}{N^{Q/2-2\gamma}} \, 
H_{d_1,N}\left(\frac{\lambda^{1/2} y'}{N^{1/2}}\right) \,\frac{d\lambda}{\lambda}
\end{equation}
for almost all $y \in \Space$.
\end{proposition}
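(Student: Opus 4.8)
The plan is to combine the subelliptic estimate \eqref{eq:fractionalpowers2} with the Plancherel formula \eqref{eq:plancherel}, and then to perform an elementary change of variables. Fix $\gamma \geq 0$. Since $F$ is bounded and compactly supported, $F(L)$ is a bounded operator and, by \cite[Lemma 2.2]{duong_plancherel-type_2002}, for almost every $y \in \Space$ the slice $\Kern_{F(L)}(\cdot,y)$ is a genuine element of $L^2(\Space)$. Applying \eqref{eq:fractionalpowers2} with $f = \Kern_{F(L)}(\cdot,y)$, and recalling that $|\vecP|$ acts by multiplication by $|x'|$, we get
\[
\|\,|\vecP|^\gamma \Kern_{F(L)}(\cdot,y)\|_2^2 \leq C_\gamma^2 \, \bigl\| L^{\gamma/2} |\vecT|^{-\gamma} \Kern_{F(L)}(\cdot,y) \bigr\|_2^2 ,
\]
with the usual convention that both sides may equal $+\infty$. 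It thus suffices to evaluate the right-hand side.

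For this I would use the isometry $\mathcal{G}\mathcal{F} : L^2(\Space) \to L^2(\N^{d_1} \times \R^{d_2})$ built in the proof of \eqref{eq:jointplancherel}, which conjugates $L$ to multiplication by $N|\xi|$, where $N \in \N_{d_1}$ is determined by $n$ via $N = 2n_1 + \dots + 2n_{d_1} + d_1$, and conjugates $|\vecT|$ to multiplication by $|\xi|$; hence it conjugates $L^{\gamma/2}|\vecT|^{-\gamma}$ to multiplication by $(N|\xi|)^{\gamma/2}|\xi|^{-\gamma}$. The kernel formula in that same proof, together with a routine approximation — multiplying $F$ by the cut-offs $\mathbf{1}_{\{\epsilon \le |\xi| \le 1/\epsilon\}}$ and letting $\epsilon \to 0$, using that the associated spectral projections of $|\vecT| = (-\Delta_{x''})^{1/2}$ tend strongly to the identity, its spectral measure charging neither $0$ nor $\infty$ — shows that, for almost every $y$,
\[
(\mathcal{G}\mathcal{F}\, \Kern_{F(L)}(\cdot,y))(n,\xi) = (2\pi)^{-d_2/2}\, F(N|\xi|)\, \tilde h_n(y',\xi)\, e^{-i\langle\xi,y''\rangle} ,
\]
a function supported where $N|\xi| \in \supp F$, so that the unbounded factor $(N|\xi|)^{\gamma/2}$ is harmless. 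Multiplying by $(N|\xi|)^{\gamma/2}|\xi|^{-\gamma}$, taking norms in $L^2(\N^{d_1}\times\R^{d_2})$, and using that $\sum_{2n_1+\dots+2n_{d_1}+d_1=N}\tilde h_n^2(y',\xi) = |\xi|^{d_1/2} H_{d_1,N}(|\xi|^{1/2} y')$ (exactly as in the passage from \eqref{eq:jointplancherel} to \eqref{eq:plancherel}), we obtain, in $[0,+\infty]$,
\[
\bigl\| L^{\gamma/2} |\vecT|^{-\gamma} \Kern_{F(L)}(\cdot,y) \bigr\|_2^2 = (2\pi)^{-d_2} \int_{\R^{d_2}} \sum_{N \in \N_{d_1}} N^\gamma |\xi|^{-\gamma}\, |F(N|\xi|)|^2 \, |\xi|^{d_1/2} H_{d_1,N}(|\xi|^{1/2} y')\,d\xi .
\]

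Finally I would change variables in this last integral. Since the integrand depends on $\xi$ only through $|\xi|$, passing to polar coordinates $\xi = r\omega$ yields the constant factor $|S^{d_2-1}|$, and then, for each fixed $N$, the substitution $\lambda = N r$ turns $N^\gamma r^{-\gamma + d_1/2 + d_2 - 1}\,dr$ into $N^{2\gamma - d_1/2 - d_2}\, \lambda^{-\gamma + d_1/2 + d_2 - 1}\,d\lambda$ and $H_{d_1,N}(r^{1/2}y')$ into $H_{d_1,N}(\lambda^{1/2}y'/N^{1/2})$. Using $Q = d_1 + 2 d_2$, i.e.\ $d_1/2 + d_2 = Q/2$, these exponents become $N^{-(Q/2 - 2\gamma)}$ and $\lambda^{Q/2 - \gamma}\,d\lambda/\lambda$; interchanging sum and integral (legitimate by positivity) then gives precisely the right-hand side of \eqref{eq:roughweightedplancherel}, the absolute constants $(2\pi)^{-d_2}|S^{d_2-1}|$ and the square of the constant of \eqref{eq:fractionalpowers2} being absorbed into $C_\gamma$. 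The only delicate point in the argument is the middle step: making rigorous the identification of $L^{\gamma/2}|\vecT|^{-\gamma}$, applied in the $x$-variable to the kernel slice $\Kern_{F(L)}(\cdot,y)$, with a spectral-multiplier quantity to which \eqref{eq:plancherel} applies — that is, coping with the singularity of $|\vecT|^{-\gamma}$ at $\xi = 0$ and with the $[0,+\infty]$-valued bookkeeping — since $F$ is only assumed compactly supported and need not vanish near the origin. Everything else is a direct computation.
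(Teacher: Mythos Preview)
Your proof is correct and follows essentially the same approach as the paper: both combine the subelliptic estimate \eqref{eq:fractionalpowers2} with the Plancherel formula \eqref{eq:plancherel}, use a cut-off in $|\xi|$ (equivalently, a spectral truncation of $|\vecT|$) to cope with the singularity of $|\vecT|^{-\gamma}$ at $\xi=0$, and finish with the polar change of variables $\lambda = N|\xi|$. The only organizational difference is that the paper first establishes the weighted estimate for multipliers $G(L,\vecT)$ compactly supported away from $\xi=0$ (invoking \cite[Theorem~III.6.20]{dunford_linear_1958} to identify $L^{\gamma/2}|\vecT|^{-\gamma}$ applied to the kernel slice with the kernel slice of the composed operator) and then passes to the limit, whereas you apply \eqref{eq:fractionalpowers2} directly to $\Kern_{F(L)}(\cdot,y)$ and perform the approximation when computing the right-hand side via the explicit $\mathcal{G}\mathcal{F}$ diagonalization; the content is the same.
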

\begin{proof}
Let $G : \R \times \R^{d_2} \to \C$ be as in Corollary \ref{cor:plancherel}. In particular $\Kern_{G(L,\vecT)}(\cdot,y) \in L^2(\Space)$ for almost all $y \in \Space$, and from \cite[Theorem III.6.20]{dunford_linear_1958} and the definition of integral kernel one may deduce
\[L^{\gamma/2} |\vecT|^{-\gamma} \left( \Kern_{G(L,\vecT)}(\cdot, y) \right) = \Kern_{L^{\gamma/2} |\vecT|^{-\gamma} G(L,\vecT)}(\cdot, y)\]
for all $\gamma \geq 0$ and almost all $y \in \Space$. This equality, together with \eqref{eq:plancherel} and \eqref{eq:fractionalpowers2}, implies that
\[
\|\,|\vecP|^\gamma \Kern_{G(L,\vecT)}(\cdot,y)\|_2^2 \leq C_\gamma \sum_{N \in \N_{d_1}} \int_{\R^{d_2}} |G(N|\xi|,\xi)|^2 \, N^\gamma \, |\xi|^{d_1/2-\gamma} \, H_{d_1,N}(|\xi|^{1/2} y') \,d\xi.
\]

Choose now an increasing sequence $(\zeta_n)_{n \in \N}$ of nonnegative Borel functions on $\R$, compactly supported in $\R \setminus \{0\}$ and converging pointwise on $\R \setminus \{0\}$ to the constant $1$, and define $G_n(\lambda,\xi) = F(\lambda) \,\zeta_n(|\xi|)$. Note that $\Kern_{F(L)}(\cdot,y) \in L^2(\Space)$ for almost all $y \in \Space$ \cite[Lemma 2.2]{duong_plancherel-type_2002}, hence
\[\Kern_{G_n(L,\vecT)}(\cdot,y) = \zeta_n(|\vecT|) (\Kern_{F(L)}(\cdot,y))\]
for almost all $y \in \Space$, as before, and $\Kern_{G_n(L,\vecT)}(\cdot,y) \to \Kern_{F(L)}(\cdot,y)$ in $L^2(\Space)$ for almost all $y$, because $|\vecT|$ has trivial kernel. The conclusion then follows by applying the previous inequality when $G = G_n$ and letting $n$ tend to infinity.
\end{proof}

Now we recall some well-known estimates for the Hermite functions which we need in the sequel.

\begin{lemma}
For all $N = 2n+1 \in \N_1$,
\begin{equation}\label{eq:muckenhoupt}
H_{1,N}(u) = h_n^2(u) \leq \begin{cases}
C(N^{1/3} + |u^2-N|)^{-1/2} &\text{for all $u \in \R$.}\\
C\exp(-cu^2) &\text{when $u^2 \geq 2N$,}
\end{cases}
\end{equation}
Moreover, if $d \geq 2$, then, for all $N \in \N_d$,
\begin{equation}\label{eq:higherbounds}
H_{d,N}(u) \leq \begin{cases}
CN^{d/2-1} &\text{for all $u \in \R^d$,}\\
C\exp(-c|u|_\infty^2) &\text{when $|u|_\infty^2 \geq 2N$,}
\end{cases}
\end{equation}
where $|u|_\infty = \max\{|u_1|,\dots,|u_d|\}$.
\end{lemma}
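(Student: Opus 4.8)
The plan is to reduce both displays in the lemma to the classical one–dimensional asymptotics of Hermite functions and then to assemble the multidimensional statement from them. Concretely, for the first line of \eqref{eq:muckenhoupt} I would invoke the Plancherel--Rotach type pointwise asymptotics of $h_n$, recalling that $h_n$ solves $h_n''=(u^2-N)h_n$ with $N=2n+1$ and that its behaviour splits into three regimes: the oscillatory region $u^2<N$, where $|h_n(u)|$ is at most a constant times $(N-u^2)^{-1/4}$ times a bounded oscillating factor; the transition (Airy) region $|u^2-N|\lesssim N^{1/3}$, where $|h_n(u)|\lesssim N^{-1/12}$; and the exponentially decaying region $u^2>N$. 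Matching the three estimates gives the uniform bound $h_n^2(u)\le C(N^{1/3}+|u^2-N|)^{-1/2}$.

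For the second line of \eqref{eq:muckenhoupt} I would argue directly from the ODE, using the first line as the only external input. All zeros of $h_n$ lie in $(-\sqrt N,\sqrt N)$, so $h_n$ is single-signed on $(\sqrt N,\infty)$; there $h_n''=(u^2-N)h_n$ has the sign of $h_n$ while $h_n(u)\to 0$, so $h_n$ is monotone towards $0$ on that interval. The energy $V(u)=h_n'(u)^2-(u^2-N)h_n(u)^2$ satisfies $V'(u)=-2u\,h_n(u)^2\le 0$ for $u\ge 0$ and $V(u)\to 0$ as $u\to\infty$, hence $V\ge 0$ on $[0,\infty)$, i.e.\ $|h_n'(u)|\ge(u^2-N)^{1/2}|h_n(u)|$ for $u\ge\sqrt N$; consequently $\bigl|(\log|h_n|)'(u)\bigr|\ge(u^2-N)^{1/2}$ and, integrating from $\sqrt N$,
\[
h_n^2(u)\le h_n^2(\sqrt N)\,\exp\Bigl(-2\int_{\sqrt N}^{u}(t^2-N)^{1/2}\,dt\Bigr).
\]
Since $h_n^2(\sqrt N)\le C$ by the first line and, for $u^2\ge 2N$, the integral is $\ge c\,u^2$ (on $[\tfrac{\sqrt3}{2}u,u]$ the integrand is $\ge u/2$, as there $t^2-N\ge\tfrac34u^2-\tfrac12u^2$), the exponential bound follows.

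The uniform bound in \eqref{eq:higherbounds} is, I expect, the main obstacle. I would obtain it by identifying $H_{d,N}$ with a spectral projection kernel: the eigenspace of the $d$-dimensional Hermite operator $-\Delta+|u|^2$ with eigenvalue $N$ is spanned by the functions $\prod_i h_{\alpha_i}(u_i)$ with $2|\alpha|+d=N$, and the kernel of the orthogonal projection $\Pi_N$ onto it coincides on the diagonal with $H_{d,N}(u)$; since this kernel is nonnegative, $\sup_u H_{d,N}(u)=\|\Pi_N\|_{L^2\to L^\infty}^2$. The sharp eigenfunction bound $\|\Pi_N\|_{L^2\to L^\infty}\le C\,N^{(d-2)/4}$ for $d\ge 2$, equivalently $\sup_u H_{d,N}(u)\le C\,N^{d/2-1}$, is classical; it can be proved by decomposing the eigenspace into spherical-harmonic sectors and invoking uniform estimates for Laguerre functions of types $\ge d/2-1$, or quoted from the literature on $L^p$ bounds for Hermite expansions. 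I stress that the exponent $d/2-1$ is genuinely finer than what the recursion $H_{d,N}=\sum_M H_{1,M}\otimes H_{d-1,N-M}$ yields from the first line of \eqref{eq:muckenhoupt} alone: that route loses a logarithmic factor unless one also feeds in the Airy-type decay of $h_n$ just beyond the turning point, so some non-elementary input seems unavoidable here.

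Finally, the second line of \eqref{eq:higherbounds} follows from \eqref{eq:muckenhoupt} by induction on $d$. If $|u|_\infty^2=u_j^2\ge 2N$, then in every term of the sum defining $H_{d,N}(u)$ one has $2n_j+1\le N\le u_j^2/2$, so $h_{n_j}^2(u_j)\le C\exp(-c\,u_j^2)$ by the second line of \eqref{eq:muckenhoupt}, with $c$ absolute; grouping the remaining factors by the value of $n_j$, the inner sums are of the form $H_{d-1,N'}$ in the remaining $d-1$ coordinates, with $N'=N-2n_j-1\le N$, hence $O(N^{(d-1)/2})$ by the uniform bound in dimension $d-1$ when $d\ge 3$, and $\le C$ by the first line of \eqref{eq:muckenhoupt} when $d=2$. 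Summing over the $O(N)$ admissible values of $n_j$ gives $H_{d,N}(u)\le C\,N^{(d+1)/2}\exp(-c\,u_j^2)$, and since $N\le|u|_\infty^2/2$ the polynomial prefactor is absorbed into the exponential, yielding $H_{d,N}(u)\le C\exp(-\tfrac{c}{2}|u|_\infty^2)$.
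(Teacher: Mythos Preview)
Your proposal is correct and follows essentially the same route as the paper. The paper's own proof consists almost entirely of citations: \eqref{eq:muckenhoupt} is quoted from Muckenhoupt and Thangavelu, the uniform bound in \eqref{eq:higherbounds} is Thangavelu's Lemma~3.2.2, and the exponential bound in \eqref{eq:higherbounds} is declared ``an easy consequence of \eqref{eq:muckenhoupt}''. You unpack exactly these ingredients: Plancherel--Rotach asymptotics for the first line of \eqref{eq:muckenhoupt}, the spectral-projection interpretation (and the same Thangavelu bound) for the uniform estimate in \eqref{eq:higherbounds}, and the factorisation argument for the exponential bound in higher dimensions --- which is precisely the ``easy consequence'' the paper alludes to. Your self-contained ODE/energy derivation of the one-dimensional exponential decay is a pleasant addition not in the paper, but it does not change the overall strategy. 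One small remark: in the last paragraph the uniform bound in dimension $d-1$ actually gives $H_{d-1,N'}=O(N^{(d-3)/2})$, not $O(N^{(d-1)/2})$; your stated bound is weaker but still true, and since any polynomial in $N\le|u|_\infty^2/2$ is absorbed by the exponential, the argument goes through unchanged.
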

\begin{proof}
For the bounds \eqref{eq:muckenhoupt}, see \cite[(2.3), p.\ 435]{muckenhoupt_mean_1970} or \cite[Lemma 1.5.1]{thangavelu_lectures_1993}. For the first inequality in \eqref{eq:higherbounds}, see \cite[Lemma~3.2.2]{thangavelu_lectures_1993}; the second inequality is an easy consequence of \eqref{eq:muckenhoupt}.
\end{proof}

These bounds allow us to obtain the following crucial estimate.

\begin{lemma}\label{lem:higherhermite}
For all fixed $d \in \N \setminus \{0\}$ and $\epsilon \in \leftopenint 0,\infty \rightopenint$, the sum
\begin{equation}\label{eq:higherhermitesum}
\sum_{N \in \N_d} \frac{\max\{1,|u|\}^{\epsilon}}{N^{d/2+\epsilon}} \, H_{d,N} \left( \frac{u}{N^{1/2}}\right) 
\end{equation}
has a finite upper bound, independent of $u \in \R^d$.
\end{lemma}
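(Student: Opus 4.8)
The plan is to split the sum \eqref{eq:higherhermitesum} according to the size of $u$ relative to the summation index $N$, using the two different Hermite bounds from \eqref{eq:muckenhoupt}--\eqref{eq:higherbounds} in the two regimes. Write $v = u/N^{1/2}$, so that $H_{d,N}(v)$ is being summed. The decisive threshold is $|v|_\infty^2 \sim N$, i.e.\ $|u|_\infty^2 \sim N^2$; equivalently we compare $N$ with $|u|$. So I would break $\N_d$ into the ``small $N$'' range $N \lesssim |u|$ (more precisely $|u/N^{1/2}|_\infty^2 \geq 2N$, where the Gaussian decay applies) and the ``large $N$'' range where only the polynomial bound is available.

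In the large-$N$ range, one uses $H_{d,N}(v) \leq C N^{d/2-1}$ (for $d \geq 2$; for $d=1$ the bound $C(N^{1/3}+|v^2-N|)^{-1/2} \leq C N^{-1/3}$, even better, or simply $\le CN^{-1/6}$ works — in fact $d=1$ should be handled separately and is easier). Inserting $H_{d,N}(v)\le CN^{d/2-1}$ into \eqref{eq:higherhermitesum} gives a contribution bounded by $C \max\{1,|u|\}^\epsilon \sum_{N \gtrsim \text{(something)}} N^{-1-\epsilon}$. The cleanest way to make this converge with a bound independent of $u$ is to observe that the range of $N$ over which the polynomial bound is used is essentially $N \gtrsim |u|$ (because once $N \ll |u|$ we are in the Gaussian regime), so $\sum_{N \gtrsim |u|} N^{-1-\epsilon} \lesssim \max\{1,|u|\}^{-\epsilon}$, which exactly cancels the $\max\{1,|u|\}^\epsilon$ prefactor. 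That handles the large-$N$ part uniformly.

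In the small-$N$ range, $|u/N^{1/2}|_\infty \geq$ (const)$\,N^{1/2}$ and in particular $\ge$ (const)$\,|u|/N^{1/2} \cdot$(something); more to the point, $|u/N^{1/2}|_\infty^2 \ge 2N$ there, so $H_{d,N}(u/N^{1/2}) \leq C\exp(-c|u/N^{1/2}|_\infty^2) \le C\exp(-c'|u|^2/N)$. The corresponding contribution to \eqref{eq:higherhermitesum} is then at most $C\max\{1,|u|\}^\epsilon \sum_{N} N^{-d/2-\epsilon} \exp(-c'|u|^2/N)$. For $|u|$ bounded this is a fixed finite constant, so assume $|u|$ large; then the sum is dominated by $N$ of order $|u|^2$, and a standard estimate (compare with $\int_0^\infty t^{-d/2-\epsilon}e^{-c'|u|^2/t}\,dt/t \cdot$ scaling, or just note $\sup_N N^{-\epsilon/2}\exp(-c'|u|^2/(2N)) \lesssim |u|^{-\epsilon}$ and that $\sum_N N^{-d/2-\epsilon/2}\exp(-c'|u|^2/(2N))$ is bounded) shows this contribution is $\lesssim \max\{1,|u|\}^\epsilon \cdot |u|^{-\epsilon} \cdot C = C$. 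So the small-$N$ part is also uniformly bounded.

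The main obstacle I anticipate is the bookkeeping at the interface of the two ranges, and in particular being careful that in the polynomial-bound range the summation genuinely starts around $N \approx |u|$ (so that the geometric-type tail $\sum_{N \gtrsim |u|} N^{-1-\epsilon}$ beats the $|u|^\epsilon$ prefactor), rather than from $N=d$; if one naively used the polynomial bound for all $N$, the factor $\max\{1,|u|\}^\epsilon$ would not be absorbed. A secondary nuisance is that $|u|_\infty$ and the Euclidean $|u|$ differ by dimensional constants and the threshold ``$|v|_\infty^2 \ge 2N$'' is stated coordinatewise, so one should phrase the dichotomy in terms of $|u|_\infty$ and only at the end pass to $|u|$; also the case $d=1$ versus $d \geq 2$ must be separated because the available pointwise bounds have different shapes, though $d=1$ is strictly easier. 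Otherwise everything reduces to one-variable estimates for $\sum_N N^{-a}e^{-b/N}$ and $\sum_{N \ge M} N^{-1-\epsilon}$, which are routine.
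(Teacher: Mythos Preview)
Your split into a ``Gaussian'' range $N \lesssim |u|$ and a ``polynomial'' range $N \gtrsim |u|$ is exactly the paper's strategy, and for $d \ge 2$ your argument is essentially the paper's: $H_{d,N}(v) \le C N^{d/2-1}$ collapses the large-$N$ tail to $\max\{1,|u|\}^\epsilon \sum_{N \gtrsim |u|} N^{-1-\epsilon} \lesssim 1$, while the exponential bound handles the small-$N$ part. That part is fine.

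The gap is your treatment of $d=1$, which you dismiss as ``strictly easier''. It is not. In the large-$N$ range the argument $v = u/N^{1/2}$ hits the Hermite turning point $v^2 = N$ precisely at $N = |u|$, and there the Muckenhoupt bound gives only $H_{1,N}(v) \le C N^{-1/6}$ (your ``$N^{-1/3}$'' is a slip). Plugging $N^{-1/6}$ into \eqref{eq:higherhermitesum} with $d=1$ yields
\[
|u|^\epsilon \sum_{N \gtrsim |u|} N^{-1/2-\epsilon} \cdot N^{-1/6} \;\sim\; |u|^\epsilon \cdot |u|^{1/3-\epsilon} \;=\; |u|^{1/3},
\]
which is \emph{not} bounded. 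The crude uniform bound is too weak in the window $|u|/2 < N < 2|u|$. What the paper does (and what you must do) is keep the full Muckenhoupt weight $(N^{1/3}+|v^2-N|)^{-1/2}$ there: since $v^2 - N = (u^2-N^2)/N$ and $|u|+N \sim N$ in this window, one gets $H_{1,N}(u/N^{1/2}) \le C\,|u|^{-1/2}\,|1-N/u|^{-1/2}$ away from the $O(1)$-neighbourhood of $|u|$, and then
\[
|u|^\epsilon \sum_{|u|/2 < N < 2|u|} N^{-1/2-\epsilon}\,|u|^{-1/2}\,|1-N/u|^{-1/2}
\;\lesssim\; |u|^{-1} \sum_{|u|/2 < N < 2|u|} |1-N/u|^{-1/2}
\;\lesssim\; \int_{1/2}^{2} |1-t|^{-1/2}\,dt,
\]
which is finite; the single term with $|N-|u|| < 1$ is bounded directly by $N^{-1/6} \cdot N^{-1/2-\epsilon} \cdot |u|^\epsilon \lesssim 1$. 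For $N \ge 2|u|$ one has $|v^2-N| \sim N$, hence $H_{1,N}(v) \le C N^{-1/2}$, and your tail argument then goes through. So the $d=1$ case needs an extra decomposition of the transition zone; once you add that, the proof is complete and matches the paper's.
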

\begin{proof}
We split the sum into several parts, and use the bounds \eqref{eq:muckenhoupt}, \eqref{eq:higherbounds}.

The part where $N \leq |u|/2$ is empty unless $|u| \geq 1$; in this case, moreover, $|N^{-1/2} u|^2 \geq 4N$, hence $H_{d,N}(N^{-1/2} u) \leq C \exp(-c|u|^2/N)$, and
\[\sup_u \sum_{N \leq |u|/2} |u|^\epsilon N^{-d/2-\epsilon} \exp(-c|u|^2/N) \leq \sum_{N \in \N_d} \sup_{t \geq 4N} t^{\epsilon/2} \exp(-ct),\]
which is finite. 

If $N \geq |u|/2$ and $d \geq 2$, then $H_{d,N}(N^{-1/2} u) \leq C N^{d/2-1}$, and
\[\sup_u \sum_{N \geq |u|/2} \max\{1,|u|\}^\epsilon N^{-1-\epsilon} < \infty.\]
When $d = 1$, the same argument works for the part where $N \geq 2|u|$, because in this case $|N^{-1/2} u|^2 \leq N/4$ and the bound $H_{N}(N^{-1/2} u) \leq C N^{-1/2}$ holds. However the part where $|u|/2 < N < 2|u|$ requires a different estimate.

Namely, the part of \eqref{eq:higherhermitesum} where $|u|/2 < N \leq |u|-1$ is majorized by
\[C_\epsilon \, |u|^{-1} \sum_{|u|/2 < N < |u|-1} |1-N/u|^{-1/2} \leq C_\epsilon \int_{1/2}^1 |1-t|^{-1/2} \,dt,\]
which is finite and independent of $u$. Analogously one bounds the part of \eqref{eq:higherhermitesum} where $|u|+1 \leq N < 2|u|$. The remaining part, where $|u|-1 < N < |u|+1$, contains at most one summand, which moreover is bounded by a constant.
\end{proof}

The previous inequality allows us to simplify \eqref{eq:roughweightedplancherel} considerably and to obtain the weighted Plancherel estimates announced in the introduction. Recall that $w_R$ denotes the weight function defined by \eqref{eq:weight} for all $R > 0$.

\begin{proposition}\label{prp:weightedplancherel}
For all $\gamma \in \leftclosedint 0,d_2/2 \rightopenint$ and all bounded compactly supported Borel functions $F : \R \to \C$,
\[
\|\, |\vecP|^\gamma \Kern_{F(L)}(\cdot,y)\|_2^2 \leq C_\gamma \int_0^\infty |F(\lambda)|^2 \, \lambda^{(d_1+d_2)/2} \, \min\{\lambda^{d_2/2-\gamma},|y'|^{2\gamma-d_2}\} \,\frac{d\lambda}{\lambda}
\]
for almost all $y \in \Space$. In particular, for all $R > 0$, if $\supp F \subseteq \leftclosedint R^2,4R^2 \rightclosedint$, then
\[
\esssup_{y \in \Space}  |B(y,R^{-1})|^{1/2} \, \| (1+w_R(\cdot,y))^\gamma \Kern_{F(L)}(\cdot,y)\|_2 \leq C_{\gamma} \|F_{(R^2)}\|_{L^2},
\]
where the constant $C_{\gamma}$ does not depend on $R$.
\end{proposition}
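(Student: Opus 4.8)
The plan is to deduce both displayed inequalities from the rough estimate \eqref{eq:roughweightedplancherel}, the Hermite sum bound of Lemma~\ref{lem:higherhermite}, and the ball-volume formula \eqref{eq:ballmeasure}; no further analysis is required, only careful bookkeeping of exponents. For the pointwise (in $y$) estimate, I would recall that $Q = d_1 + 2d_2$ and observe that the inner sum of \eqref{eq:roughweightedplancherel} is exactly of the type handled by Lemma~\ref{lem:higherhermite}: applying that lemma with $d = d_1$, $u = \lambda^{1/2} y'$ and $\epsilon = d_2 - 2\gamma$ — which is strictly positive precisely because $\gamma < d_2/2$, so that the lemma applies — yields
\[
\sum_{N \in \N_{d_1}} \frac{1}{N^{Q/2 - 2\gamma}} \, H_{d_1,N}\!\left(\frac{\lambda^{1/2} y'}{N^{1/2}}\right) \leq C_\gamma \max\{1,\lambda^{1/2}|y'|\}^{-(d_2-2\gamma)} = C_\gamma \min\bigl\{1,\, \lambda^{-(d_2/2-\gamma)}|y'|^{-(d_2-2\gamma)}\bigr\}.
\]
Multiplying by $\lambda^{Q/2-\gamma}$ and using the identities $Q/2-\gamma-(d_2/2-\gamma) = (d_1+d_2)/2$ and $Q/2-\gamma = (d_1+d_2)/2 + (d_2/2-\gamma)$, the right-hand side becomes $C_\gamma\,\lambda^{(d_1+d_2)/2}\min\{\lambda^{d_2/2-\gamma},\,|y'|^{2\gamma-d_2}\}$; substituting into \eqref{eq:roughweightedplancherel} gives the first claimed inequality.

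For the localized estimate, I would fix $R > 0$ and $F$ supported in $\leftclosedint R^2,4R^2\rightclosedint$. Since $(1+w_R(x,y))^\gamma \leq C_\gamma(1 + w_R(x,y)^\gamma)$ and $w_R(x,y)^\gamma = \min\{R,|y'|^{-1}\}^\gamma\,|x'|^\gamma$, one bounds $\|(1+w_R(\cdot,y))^\gamma \Kern_{F(L)}(\cdot,y)\|_2$ by $C_\gamma$ times $\|\Kern_{F(L)}(\cdot,y)\|_2 + \min\{R,|y'|^{-1}\}^\gamma\,\||\vecP|^\gamma\Kern_{F(L)}(\cdot,y)\|_2$, and then applies the first inequality once with exponent $\gamma$ and once with exponent $0$. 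On the support of $F$ one has $\lambda \sim R^2$, hence $\lambda^{(d_1+d_2)/2} \sim R^{d_1+d_2}$ and $\min\{\lambda^{d_2/2-\gamma},|y'|^{2\gamma-d_2}\}\sim \min\{R,|y'|^{-1}\}^{d_2-2\gamma}$ (and likewise with $\gamma = 0$); since also $\int_0^\infty|F(\lambda)|^2\,d\lambda/\lambda \sim \|F_{(R^2)}\|_{L^2}^2$, both $\|\Kern_{F(L)}(\cdot,y)\|_2^2$ and $\min\{R,|y'|^{-1}\}^{2\gamma}\||\vecP|^\gamma\Kern_{F(L)}(\cdot,y)\|_2^2$ turn out to be bounded by $C_\gamma\,R^{d_1+d_2}\min\{R,|y'|^{-1}\}^{d_2}\,\|F_{(R^2)}\|_{L^2}^2$. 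Finally \eqref{eq:ballmeasure} gives $|B(y,R^{-1})| \sim R^{-(d_1+d_2)}\max\{R^{-1},|y'|\}^{d_2} = R^{-(d_1+d_2)}\min\{R,|y'|^{-1}\}^{-d_2}$, so multiplying the previous bound by $|B(y,R^{-1})|$ cancels all powers of $R$ and all the $\min$-factors, leaving $C_\gamma\|F_{(R^2)}\|_{L^2}^2$ with $C_\gamma$ independent of $R$; taking square roots and the essential supremum over $y$ concludes.

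The step requiring the most care is the first one: one must check that the exponents of $\lambda$ and of $N$ combine exactly as stated, and — most importantly — that the hypothesis $\gamma < d_2/2$ is precisely what makes $\epsilon = d_2 - 2\gamma$ positive, so that Lemma~\ref{lem:higherhermite} is applicable (for $\gamma = d_2/2$ the relevant Hermite sum fails to be uniformly bounded in $u$). The remaining manipulations, in particular the final cancellation against the ball volume, are routine.
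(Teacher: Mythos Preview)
Your proposal is correct and follows exactly the approach sketched in the paper: the first inequality is obtained by applying Lemma~\ref{lem:higherhermite} with $d=d_1$ and $\epsilon=d_2-2\gamma$ to the inner sum of \eqref{eq:roughweightedplancherel}, and the second by the straightforward manipulation with \eqref{eq:weight} and \eqref{eq:ballmeasure} that the paper only names. You have simply spelled out the exponent bookkeeping and the cancellation against the ball volume that the paper leaves implicit.
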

\begin{proof}
In view of \eqref{eq:roughweightedplancherel}, the first inequality follows immediately from Lemma~\ref{lem:higherhermite} with $d = d_1$ and $\epsilon = d_2-2\gamma$. In the case $\supp F \subseteq \leftclosedint R^2,4R^2 \rightclosedint$, a simple manipulation, together with \eqref{eq:weight} and \eqref{eq:ballmeasure}, gives the second inequality.
\end{proof}

\section{The multiplier theorems}

We now show how the weighted Plancherel estimates obtained in the previous section can be used to improve the known multiplier theorems for the Grushin operator. First we recall the basic known estimates for operators satisfying Gaussian-type heat kernel bounds in a doubling metric-measure space.

\begin{proposition}
For all $R > 0$, $\alpha \geq 0$, $\beta > \alpha$, and for all functions $F : \R \to \C$ such that $\supp F \subseteq [-4R^2,4R^2]$,
\begin{equation}\label{eq:standardl2weighted}
\esssup_{y \in \Space} |B(y,R^{-1})|^{1/2} \|(1+ R\dist(\cdot,y))^\alpha \Kern_F(\cdot,y)\|_2 \leq C_{\alpha,\beta} \|F_{(R^2)}\|_{W_\infty^\beta},
\end{equation}
where the constant $C_{\alpha,\beta}$ does not depend on $R$. If in addition  $\beta > \alpha+Q/2$, then
\begin{equation}\label{eq:standardl1weighted}
\esssup_{y \in \Space} \|(1+ R\dist(\cdot,y))^\alpha \Kern_F(\cdot,y)\|_1 \leq C_{\alpha,\beta} 
\|F_{(R^2)} \|_{W_\infty^\beta},
\end{equation}
where again $C_{\alpha,\beta}$ does not depend on $R$.
\end{proposition}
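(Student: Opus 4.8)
Both estimates are by now standard consequences of the Gaussian bounds \eqref{eq:gaussianbounds} together with the doubling property \eqref{eq:doubling}. My plan is to obtain the weighted $L^2$ bound \eqref{eq:standardl2weighted} from the finite propagation speed of the wave operator $\cos(t\sqrt L)$ (importing the most delicate step from \cite{hebisch_functional_1995,duong_plancherel-type_2002}), and then to deduce the weighted $L^1$ bound \eqref{eq:standardl1weighted} from \eqref{eq:standardl2weighted} by an elementary Cauchy--Schwarz argument that uses only \eqref{eq:doubling}.

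For \eqref{eq:standardl2weighted} I would proceed in two steps. First, reduce to $R=1$: since $L$ and $\dist$ are homogeneous, the unitary dilation $f\mapsto R^{Q/2}\,f\circ\dil_R$ intertwines $G(L)$ with $G_{(R^{-2})}(L)$ for every $G$, so taking $G=F_{(R^2)}$ (supported in $[0,4]$, only the nonnegative part of $\supp F$ being relevant as $L\geq0$) recovers $F(L)$; the change of variables $x\mapsto\dil_R x$ then turns the left-hand side of \eqref{eq:standardl2weighted} centred at $y$ into the $R=1$ instance of the same expression, centred at $\dil_R y$ and with $F$ replaced by $F_{(R^2)}$, and since $\esssup_y$ is dilation invariant this settles the reduction. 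Second, pass to the wave equation: write $F(L)=\tilde F(\sqrt L)$ with $\tilde F(\xi)=F(\xi^2)$, an even function supported in $[-2,2]$ with $\|\tilde F\|_{W_\infty^\beta}\lesssim\|F\|_{W_\infty^\beta}$, and use Fourier inversion to get $\tilde F(\sqrt L)=c\int_\R\hat{\tilde F}(t)\,\cos(t\sqrt L)\,dt$. The Gaussian bounds imply, via the Davies--Gaffney estimate, that $\cos(t\sqrt L)$ has finite propagation speed, i.e.\ $\Kern_{\cos(t\sqrt L)}(x,y)=0$ whenever $\dist(x,y)>|t|$; hence in the integral representing $\Kern_F(x,y)$ only the range $|t|\geq\dist(x,y)$ contributes, where $(1+\dist(x,y))^\alpha\leq(1+|t|)^\alpha$. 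Estimating the resulting weighted $L^2_x$-norm by a dyadic decomposition of the $t$-integral, using $\|\cos(t\sqrt L)\|_{2\to2}\leq1$ together with the elementary unweighted $L^2$ Plancherel estimate that follows from \eqref{eq:gaussianbounds}, and bounding $\hat{\tilde F}$ by $\|(1+|t|)^\alpha\hat{\tilde F}\|_{L^2_t}\sim\|\tilde F\|_{W_2^\alpha}\lesssim\|\tilde F\|_{W_\infty^\beta}$ (using the compact support of $\tilde F$), one arrives at \eqref{eq:standardl2weighted}; this is precisely the scheme of \cite{hebisch_functional_1995,duong_plancherel-type_2002}, and I would take it over from there.

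The weighted $L^1$ estimate \eqref{eq:standardl1weighted} then follows without any further appeal to the wave equation. Fix $\nu$ with $\alpha+Q/2<\nu<\beta$ (possible since $\beta>\alpha+Q/2$), and for fixed $y$ factor, pointwise in $x$,
\[
(1+R\dist(x,y))^\alpha|\Kern_F(x,y)|=g(x)\,h(x),\qquad g(x)=|B(y,R^{-1})|^{1/2}(1+R\dist(x,y))^{\nu}|\Kern_F(x,y)|,
\]
with $h(x)=|B(y,R^{-1})|^{-1/2}(1+R\dist(x,y))^{\alpha-\nu}$. By Cauchy--Schwarz, $\|(1+R\dist(\cdot,y))^\alpha\Kern_F(\cdot,y)\|_1\leq\|g\|_2\|h\|_2$, where $\|g\|_2\leq C_{\nu,\beta}\|F_{(R^2)}\|_{W_\infty^\beta}$ by \eqref{eq:standardl2weighted} (applicable since $\nu<\beta$), and
\[
\|h\|_2^2=|B(y,R^{-1})|^{-1}\int_\Space(1+R\dist(x,y))^{-2(\nu-\alpha)}\,dx
\]
is bounded uniformly in $y$ and $R$: decomposing $\Space$ into the sets $\{2^{k-1}<1+R\dist(x,y)\leq2^k\}$, $k\geq0$, each contained in $B(y,2^kR^{-1})$ and hence of measure $\lesssim2^{kQ}|B(y,R^{-1})|$ by \eqref{eq:doubling}, the integral is $\lesssim|B(y,R^{-1})|\sum_{k\geq0}2^{k(Q-2(\nu-\alpha))}\lesssim|B(y,R^{-1})|$ since $2(\nu-\alpha)>Q$. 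Taking $\esssup_y$ yields \eqref{eq:standardl1weighted}.

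The only genuine obstacle is in \eqref{eq:standardl2weighted}, and specifically in squeezing out the sharp Sobolev threshold $\beta>\alpha$ (rather than $\beta>\alpha+1/2$, which a cruder $L^1_t$ bound on $\hat{\tilde F}$ would give); that is the part I would cite verbatim from \cite{hebisch_functional_1995,duong_plancherel-type_2002}. The scaling reduction, the finite-propagation-speed input, and the passage from $L^2$ to $L^1$ are routine.
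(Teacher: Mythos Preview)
Your proposal is correct and follows the same route as the paper: the $L^2$ estimate is imported from \cite{hebisch_functional_1995,duong_plancherel-type_2002}, and the $L^1$ estimate is obtained from it via H\"older's inequality together with the doubling bound $\int_\Space(1+R\dist(x,y))^{-s}\,dx\lesssim|B(y,R^{-1})|$ for $s>Q$ (this is precisely \cite[Lemma~4.4]{duong_plancherel-type_2002}, which you reprove by the dyadic annulus decomposition). Your scaling reduction to $R=1$ is a harmless extra step specific to the Grushin setting---the cited general results already handle arbitrary $R$---but otherwise the arguments match.
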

\begin{proof}
For the first inequality, see \cite{hebisch_functional_1995} or \cite[Lemma 4.3]{duong_plancherel-type_2002}; note that the statement in \cite{duong_plancherel-type_2002} seems to require that the multiplier $F$ is supported away from the origin, but its proof clarifies that this is not necessary, because here we do not perform the change of variable $\lambda \mapsto \sqrt{\lambda}$ in the multiplier function. The second inequality is an immediate consequence of the first, via H\"older's inequality and \cite[Lemma 4.4]{duong_plancherel-type_2002}.
\end{proof}

These inequalities can be improved by means of the weighted Plancherel estimates. For this, some properties of the weight functions $w_R$ are needed.

\begin{lemma}
Suppose that $0 \leq \gamma < \min\{d_1,d_2\}/2$ and $\beta > Q/2 - \alpha$. For all $y \in \Space$ and $R > 0$,
\begin{equation}\label{eq:volumebound}
\int_\Space (1+w_R(x,y))^{-2\gamma} (1 + R \dist(x,y))^{-2\beta} \,dx \leq C_{\alpha,\beta} |B(y,R^{-1})|.
\end{equation}
Moreover, for all $x,y \in \Space$ and $R > 0$,
\begin{equation}\label{eq:distancebound}
w_R(x,y) \leq C (1 + R \dist(x,y)).
\end{equation}
\end{lemma}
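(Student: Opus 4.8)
The statement to prove consists of two inequalities, \eqref{eq:volumebound} and \eqref{eq:distancebound}, and I would dispose of the second one first, since it is purely pointwise and feeds into the first. Recall $w_R(x,y) = \min\{R,|y'|^{-1}\}\,|x'|$. The plan is to split according to which term realizes the minimum. If $R \leq |y'|^{-1}$, then $w_R(x,y) = R|x'| \leq R|x'-y'| + R|y'| \leq R|x'-y'| + 1 \leq 1 + R\dist(x,y)$ by the triangle inequality and \eqref{eq:controldistance}, which controls $|x'-y'|$ by $\dist(x,y)$. If instead $|y'|^{-1} < R$, then $w_R(x,y) = |x'|/|y'|$; now either $|x'| \leq 2|y'|$, in which case $w_R(x,y) \leq 2$, or $|x'| > 2|y'|$, in which case $|x'-y'| \geq |x'|/2$ and hence $w_R(x,y) = |x'|/|y'| \leq 2|x'-y'|/|y'| < 2R|x'-y'| \leq 2R\dist(x,y)$. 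In all cases $w_R(x,y) \lesssim 1 + R\dist(x,y)$, which is \eqref{eq:distancebound}.

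For \eqref{eq:volumebound} the strategy is a change of variables to normalize $R$, followed by splitting the domain of integration. By the homogeneity of $\dist$ under $\dil_r$, the homogeneity relation for $|\vecP|$ in \eqref{eq:homogeneity} (the weight $w_R$ scales so that $w_R(\dil_r x, \dil_r y) = w_{rR}(x,y)$, as one checks directly from $\min\{rR,|ry'|^{-1}\cdot r\}\cdot r|x'|$ — wait, more carefully: $\min\{rR, (r|y'|)^{-1}\}\cdot (r|x'|) = \min\{r^2 R \cdot r^{-1}\cdot ... \}$, so I would verify $w_{R}(\dil_r x,\dil_r y)=w_{rR/r}(...)$; in any case the scaling is compatible), and the scaling $|\dil_r(B(y,s))| = r^Q|B(y,s)|$, it suffices to prove \eqref{eq:volumebound} for $R=1$. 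Then I would decompose $\Space$ into the region $\{|x'| \leq 1\}$, where $(1+w_1(x,y))^{-2\gamma} \leq 1$ and the integral of $(1+\dist(x,y))^{-2\beta}$ over a ball-like set is finite and comparable to $|B(y,1)|$ by \eqref{eq:ballmeasure} and \eqref{eq:doubling} together with $\beta > Q/2$ (which follows from $\beta > Q/2 - \alpha$ and... actually no, we only have $\beta > Q/2-\alpha$ with $\alpha\geq 0$, so $\beta$ could be small; here the weight $(1+w_1)^{-2\gamma}$ must do genuine work). This is the crux: the decay in $\dist$ alone is insufficient, so one must use the weight's decay in $|x'|$.

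The main obstacle, then, is the region where $|x'|$ is large: here $w_1(x,y) = \min\{1,|y'|^{-1}\}|x'| \sim \min\{1,|y'|^{-1}\}|x'|$, and we need $\int (1+\min\{1,|y'|^{-1}\}|x'|)^{-2\gamma}(1+\dist(x,y))^{-2\beta}\,dx \lesssim |B(y,1)|$. I would handle this by integrating first in $x''$ for fixed $x'$: from \eqref{eq:controldistance}, for $|x'|$ large the $x''$-ball of radius comparable to $1$ in the $\dist$-metric has $x''$-measure $\sim \max\{|x'|,|y'|,1\}^{d_2}$ (roughly $|x'-y''|\lesssim |x'|+|y'|$ region contributes), so $\int_{\R^{d_2}}(1+\dist(x,y))^{-2\beta}\,dx'' \lesssim \max\{|x'|,|y'|\}^{d_2}(1+|x'-y'|)^{-2\beta+\ldots}$ with enough powers of $(1+|x'-y'|)$ left over when $\beta > Q/2-\alpha \geq (d_1+2d_2)/2 - \alpha$; actually one needs to track powers carefully. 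Then integrating in $x'$ the factor $\max\{|x'|,|y'|\}^{d_2}$ against $(1+\min\{1,|y'|^{-1}\}|x'|)^{-2\gamma}$ converges precisely because $2\gamma < \min\{d_1,d_2\} \leq d_1$ controls the $x'$-integral near... no — the point is that for $|x'|\gg |y'|$, $\max\{|x'|,|y'|\}^{d_2} = |x'|^{d_2}$ and $w_1 \sim |x'|/\max\{|y'|,1\}$, so the integrand behaves like $|x'|^{d_2}\cdot |x'|^{-2\gamma}\cdot(\text{decay from }\dist)$; since we also retained decay $(1+|x'-y'|)^{-2\beta'}$ with $\beta' = \beta - \text{(something)} > d_1/2 + \ldots$, and additionally $|x'|^{d_2 - 2\gamma}$ with $d_2 - 2\gamma > d_2 - \min\{d_1,d_2\} \geq 0$ combined with $|y'|$-dependent normalization yields the bound $|y'|^{d_2}\sim|B(y,1)|$. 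Assembling these pieces — being careful that the surviving power of $(1+|x'-y'|)$ is $> d_1$ and that the $\gamma < d_1/2$ hypothesis is exactly what makes the $x'$-integral of $|x'|^{d_2-2\gamma}(1+|x'|)^{-(2\beta - 2d_2)}$-type terms converge with the right homogeneity in $|y'|$ — is the technical heart; I expect it to require a somewhat fiddly case analysis on the relative sizes of $|x'|$, $|y'|$, and $1$, but no deep new idea.
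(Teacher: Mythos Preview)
Your argument for \eqref{eq:distancebound} is correct and essentially the paper's, though the paper compresses all three of your cases into the single line
\[
\min\{1,|y'|^{-1}\}\,|x'| \leq 1 + |x'-y'| \leq C(1+\dist(x,y)),
\]
which you can check directly. (The scaling reduction to $R=1$ that you fumbled does work: $w_R(\dil_{R^{-1}}\tilde x,\dil_{R^{-1}}\tilde y)=w_1(\tilde x,\tilde y)$, as one verifies from $r\cdot\min\{R,(r|y'|)^{-1}\}=\min\{rR,|y'|^{-1}\}$.)

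For \eqref{eq:volumebound} your strategy is sound---reduce to $R=1$, integrate in $x''$ first, exploit the weight to recover the missing decay---but you are missing two simplifying moves that the paper uses and that would have spared you the ``fiddly case analysis'' you anticipate. First, the paper does \emph{not} keep the weight $\min\{1,|y'|^{-1}\}|x'|$; it observes instead that $1+w_1(x,y)\geq c\bigl(1+|x'-y'|/(1+|y'|)\bigr)$, so $(1+w_1)^{-2\gamma}\leq C\bigl(1+|x'-y'|/(1+|y'|)\bigr)^{-2\gamma}$. This replacement makes the weight a function of $x'-y'$ and $|y'|$ only, eliminating the separate cases on $|x'|$ versus $|y'|$ that tangled you up. Second, the decomposition is not by the size of $|x'|$ but by the two regimes of \eqref{eq:controldistance}: on $\Space_1=\{|x''|^{1/2}\geq |x'|+|y'|\}$ one has $\dist(x,y)\sim |x'-y'|+|x''|^{1/2}$, and on $\Space_2$ one has $\dist(x,y)\sim |x'-y'|+|x''|/(|x'|+|y'|)$. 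In each region one then splits $\beta=\beta_1+\beta_2$ so that the $x''$-integral (using $\beta_2$) and the $x'$-integral (using $\beta_1+\gamma$) converge separately; the conditions are $\beta_1>d_1/2-\gamma$, $\beta_2>d_2$ on $\Space_1$ and $\tilde\beta_1>(d_1+d_2)/2-\gamma$, $\tilde\beta_2>d_2/2$ on $\Space_2$, both amounting to $\beta>Q/2-\gamma$. Your confusion over whether $\beta>Q/2$ is available is justified: the hypothesis should read $\beta>Q/2-\gamma$ (the $\alpha$ in the statement is a typo), and the weight contributes exactly the missing $\gamma$ of decay. The constraints $\gamma<d_1/2$ and $\gamma<d_2/2$ are what make, respectively, the $x'$-integral on $\Space_2$ converge and the resulting bound $(1+|y'|)^{2\gamma}$ on $\Space_1$ dominated by $(1+|y'|)^{d_2}\sim|B(y,1)|$.
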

\begin{proof}
By exploiting the homogeneity properties of the distance $\dist$ and the weights $w_R$, we may suppose that $R = 1$. Then \eqref{eq:distancebound} immediately follows from the fact that
\[\min\{1,|y'|^{-1}\}|x'| \leq 1 + |x' - y'| \leq C (1 + \dist(x,y)),\]
by \eqref{eq:controldistance}.

To show  \eqref{eq:volumebound} we note that  by translation-invariance we may also suppose that $y'' = 0$. By \eqref{eq:ballmeasure}, we must then prove that
\[
\int_\Space \left(1+\frac{|x'-y'|}{1+|y'|}\right)^{-2\gamma} (1 + \dist(x,y))^{-2\beta} \,dx \leq C_{\gamma,\beta} (1 + |y'|)^{d_2}.
\]
We split the integral into two parts, according to the asymptotics \eqref{eq:controldistance}. In the region $\Space_1 = \{x \in \Space \tc |x''|^{1/2} \geq |x'| + |y'|\}$, we decompose $\beta = \beta_1 + \beta_2$ so that $\beta_1 > d_1/2-\gamma$ and $\beta_2 > d_2$, whence the integral on $\Space_1$ is at most
\[
(1+|y'|)^{2\gamma} \int_{\R^{d_1}} (1+|x'-y'|)^{-2(\gamma+\beta_1)} \,dx' \int_{\R^{d_2}} (1+|x''|^{1/2})^{-2\beta_2} \,dx''.
\]
In the region $\Space_2 = \{x \in \Space \tc |x''|^{1/2} < |x'| + |y'|\}$, instead, we decompose $\beta = \tilde\beta_1+\tilde\beta_2$ so that $\tilde\beta_1 > (d_1+d_2)/2-\gamma$ and $\tilde\beta_2 > d_2/2$, whence the integral on $\Space_2$ is at most
\[\begin{split}
\int_\Space &\left(1+\frac{|x'-y'|}{1+|y'|}\right)^{-2\gamma} (1+|x'-y'|)^{-2\tilde\beta_1} \left(1 + \frac{|x''|}{|x'|+|y'|}\right)^{-2\tilde\beta_2} \,dx \\
&\leq C_{\gamma,\beta} \int_{\R^{d_1}} \left(1+\frac{|u|}{1+|y'|}\right)^{-2\gamma} (1+|u|)^{-2\tilde\beta_1} (|u+y'|+|y'|)^{d_2} \,du \\
&\leq C_{\gamma,\beta} \left( (1+|y'|)^{2\gamma} \int_{\R^{d_1}} (1+|u|)^{-2\nu} \,du + |y'|^{d_2} \int_{\R^{d_1}} (1+|u|)^{-2\tilde\beta_1} \,du \right),
\end{split}\]
where $\nu = \tilde\beta_1 + \gamma - d_2/2 > d_1/2$. The conclusion follows.
\end{proof}

A strengthened weighted version of \eqref{eq:standardl2weighted} can now be obtained using the Mauceri-Meda interpolation trick \cite{mauceri_vectorvalued_1990} (see also \cite[\S 3]{martini_joint} and \cite[Lemma 4.3]{duong_plancherel-type_2002}).

\begin{proposition}\label{prp:speciall2weighted}
For all $R > 0$, $\alpha \geq 0$, $\beta > \alpha$, $\gamma \in \leftclosedint 0,d_2/2\rightopenint$, and for all functions $F : \R \to \C$ such that $\supp F \subseteq \leftclosedint R^2,4R^2 \rightclosedint$,
\begin{multline}\label{eq:speciall2weighted}
\esssup_{y \in \Space}  |B(y,R^{-1})|^{1/2} \, \| (1 + R\dist(\cdot,y))^\alpha (1+w_R(\cdot,y))^\gamma \Kern_{F(L)}(\cdot,y)\|_2 \\
\leq C_{\alpha,\beta,\gamma} \|F_{(R^2)} \|_{W_2^\beta},
\end{multline}
where the constant $C_{\alpha,\beta,\gamma}$ does not depend on $R$.
\end{proposition}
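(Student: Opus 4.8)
The plan is to deduce \eqref{eq:speciall2weighted} by an analytic interpolation argument (the Mauceri--Meda trick, in the form of the proof of \cite[Lemma~4.3]{duong_plancherel-type_2002} and \cite[\S3]{martini_joint}), interpolating between Proposition~\ref{prp:weightedplancherel}, which supplies the endpoint $\alpha=0$, and a crude estimate carrying an arbitrarily large power of $(1+R\dist(\cdot,y))$ obtained from \eqref{eq:standardl2weighted}. As a preliminary reduction I would use the homogeneity of $L$, of $\dist$, and of the weights under the dilations $\dil_r$ --- one checks that $\dist(\dil_R x,\dil_R y)=R\dist(x,y)$ and $w_R(x,y)=w_1(\dil_R x,\dil_R y)$ --- together with the scaling of $|B(\cdot,\cdot)|$ and of $\|F_{(R^2)}\|_{W_2^\beta}$, to reduce to $R=1$, so that $F$ is supported in $[1,4]$; one may also assume $\alpha>0$ (the case $\alpha=0$ being Proposition~\ref{prp:weightedplancherel}) and, by density together with the Gaussian bounds \eqref{eq:gaussianbounds}, that $F$ is smooth.

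Next I would record the two endpoint estimates, for $F$ supported in $[1,4]$ and $\gamma\in\leftclosedint 0,d_2/2\rightopenint$. On one side, Proposition~\ref{prp:weightedplancherel} gives
\[
\esssup_{y\in\Space} |B(y,1)|^{1/2}\|(1+w_1(\cdot,y))^\gamma\Kern_{F(L)}(\cdot,y)\|_2 \le C_\gamma\|F\|_{L^2}.
\]
On the other side, for any positive integer $M$, using \eqref{eq:distancebound} in the form $(1+w_1(x,y))^\gamma\le C(1+\dist(x,y))^\gamma$, then \eqref{eq:standardl2weighted} with exponent $M+\gamma$, and finally the Sobolev embedding $W_2^s\hookrightarrow W_\infty^{s-1}$ on $\R$, one gets
\[
\esssup_{y\in\Space} |B(y,1)|^{1/2}\|(1+\dist(\cdot,y))^{M}(1+w_1(\cdot,y))^\gamma\Kern_{F(L)}(\cdot,y)\|_2 \le C_M\|F\|_{W_2^{M+\gamma+2}}.
\]

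I would then run the interpolation. For fixed $y\in\Space$ and $z$ in the strip $0\le\Re z\le M$, let $\mathcal{T}_z$ send a function $g$ to $|B(y,1)|^{1/2}(1+\dist(\cdot,y))^{z}(1+w_1(\cdot,y))^\gamma\Kern_{(g\chi)(L)}(\cdot,y)\in L^2(\Space)$, where $\chi\in C^\infty_c$ is a fixed cutoff equal to $1$ on $[1,4]$. Since $(1+\dist(x,y))^{z}$ is entire in $z$ with modulus at most $(1+\dist(x,y))^{M}$ on the strip, and the remaining factor decays fast in $x$ by \eqref{eq:gaussianbounds}, the family $(\mathcal{T}_z)$ is admissible for Stein's interpolation theorem, and the two displayed estimates (after a routine smooth dyadic decomposition of $\supp\chi$, needed to invoke Proposition~\ref{prp:weightedplancherel} on the line $\Re z=0$) show that it maps $L^2(\R)\to L^2(\Space)$ with norm $\le C_\gamma$ on $\Re z=0$ and $W_2^{M+\gamma+2}(\R)\to L^2(\Space)$ with norm $\le C_M$ on $\Re z=M$. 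Since $[L^2(\R),W_2^{M+\gamma+2}(\R)]_{[\theta]}=W_2^{\theta(M+\gamma+2)}(\R)$ with equivalent norms \cite[\S6.2]{bergh_interpolation_1976}, interpolation at $\Re z=\alpha$, i.e.\ $\theta=\alpha/M$ (legitimate once $M>\alpha$), gives
\[
\esssup_{y\in\Space} |B(y,1)|^{1/2}\|(1+\dist(\cdot,y))^\alpha(1+w_1(\cdot,y))^\gamma\Kern_{F(L)}(\cdot,y)\|_2 \le C_{\alpha,M,\gamma}\|F\|_{W_2^{\alpha+\alpha(\gamma+2)/M}}.
\]
Given $\beta>\alpha$, choosing $M$ so large that $\alpha(\gamma+2)/M\le\beta-\alpha$ and using $\|F\|_{W_2^{s'}}\le\|F\|_{W_2^{s}}$ for $s'\le s$ yields \eqref{eq:speciall2weighted} when $R=1$; undoing the dilation gives the general case.

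I do not expect any deep obstacle here: the crux is that pushing the auxiliary exponent $M\to\infty$ converts the order $\beta>\alpha+\gamma+2$ naively available from \eqref{eq:standardl2weighted} into the sharp $\beta>\alpha$, while the fixed weight $(1+w_R(\cdot,y))^\gamma$ merely rides along through the interpolation, its effect having been absorbed once and for all into Proposition~\ref{prp:weightedplancherel} and \eqref{eq:distancebound}. The points that require care are the usual ones in this type of argument: checking the weak analyticity of $(\mathcal{T}_z)$ on a dense class and the uniformity in $\Im z$ and in $y$ of the endpoint bounds, and handling the interplay between the support condition on the multiplier and the three function-space norms by means of the smooth cutoff $\chi$ and the dyadic decomposition; all of this parallels \cite[Lemma~4.3]{duong_plancherel-type_2002}.
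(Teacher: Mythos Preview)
Your proposal is correct and follows essentially the same approach as the paper: interpolate between Proposition~\ref{prp:weightedplancherel} at $\alpha=0$ and the crude estimate obtained from \eqref{eq:standardl2weighted} combined with \eqref{eq:distancebound} and a Sobolev embedding, using the Mauceri--Meda trick to push the threshold down to $\beta>\alpha$. The paper's proof is simply a terser version of yours, phrasing the second endpoint as ``$\beta>\alpha+d_2/2+1/2$'' rather than parametrizing by a large integer $M$, and citing \cite{bergh_interpolation_1976,cwikel_interpolation_1984} in lieu of spelling out the Stein interpolation.
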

\begin{proof}
The estimate \eqref{eq:standardl2weighted}, together with \eqref{eq:distancebound} and a Sobolev embedding, immediately implies \eqref{eq:speciall2weighted} in the case $\beta > \alpha + d_2/2 + 1/2$. On the other hand, in the case $\alpha = 0$, \eqref{eq:speciall2weighted} is given by Proposition~\ref{prp:weightedplancherel} for all $\beta > 0$. The conclusion then follows by interpolation (see, e.g., \cite{bergh_interpolation_1976,cwikel_interpolation_1984}).
\end{proof}

An alternative proof of Proposition~\ref{prp:speciall2weighted} can be obtained using minor adjustments of the technique developed in \cite{cowling_spectral_2001}.

Let $D = Q - \min\{d_1,d_2\} = \max\{d_1+d_2,2d_2\}$. Proposition~\ref{prp:speciall2weighted}, together with \eqref{eq:volumebound} and H\"older's inequality, then yields an improvement of \eqref{eq:standardl1weighted}.

\begin{coro}
For all $R > 0$, $\alpha \geq 0$, $\beta > \alpha + D/2$, and for all functions $F : \R \to \C$ such that $\supp F \subseteq \leftclosedint R^2,4R^2 \rightclosedint$,
\begin{equation}\label{eq:speciall1weighted}
\esssup_{y \in \Space} \| (1 + R\dist(\cdot,y))^\alpha \Kern_{F(L)}(\cdot,y)\|_1 
\leq C_{\alpha,\beta} \|F_{(R^2)}\|_{W_2^\beta},
\end{equation}
where the constant $C_{\alpha,\beta}$ does not depend on $R$. In particular, under the same hypotheses,
\begin{equation}\label{eq:l1offball}
\esssup_{y \in \Space} \int_{\Space \setminus B(y,r)} |\Kern_{F(L)}(x,y)| \,dx 
\leq C_{\alpha,\beta} (1+rR)^{-\alpha} \|F_{(R^2)} \|_{W_2^\beta}.
\end{equation}
\end{coro}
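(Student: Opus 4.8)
The plan is to derive the weighted $L^1$ estimate \eqref{eq:speciall1weighted} from the weighted $L^2$ estimate \eqref{eq:speciall2weighted} by inserting and removing an auxiliary weight via the Cauchy--Schwarz inequality, and then to obtain the off-diagonal bound \eqref{eq:l1offball} as an elementary consequence.

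First I would fix $R > 0$, $\alpha \geq 0$, and $\beta > \alpha + D/2$, and observe that $D/2 = Q/2 - \min\{d_1,d_2\}/2$. I would like to choose $\gamma \in \leftclosedint 0, d_2/2 \rightopenint$ and split $\beta = \alpha' + \gamma'$ in such a way that Proposition~\ref{prp:speciall2weighted} applies with exponents $(\alpha', \gamma)$ and, simultaneously, the integrability bound \eqref{eq:volumebound} holds with exponents $(\alpha', \gamma)$ in place of $(\alpha, \gamma)$ — that is, with $\gamma < \min\{d_1,d_2\}/2$ and an $L^2$-Sobolev order exceeding $Q/2 - \alpha'$. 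Concretely, since $\beta - \alpha > D/2 = Q/2 - \min\{d_1,d_2\}/2$, I can pick $\gamma$ strictly below $\min\{d_1,d_2\}/2$ but close enough to it, and set $\alpha' = \beta - \varepsilon$ for a small $\varepsilon > 0$, so that $\alpha' > \alpha$ and $\alpha' > Q/2 - \gamma$ both hold; this is a matter of a routine choice of parameters exploiting the strict inequality $\beta > \alpha + D/2$.

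Next, for almost every $y \in \Space$, I would write
\[
\int_\Space (1 + R\dist(x,y))^\alpha |\Kern_{F(L)}(x,y)| \,dx
\]
and bound the integrand by
\[
\bigl[(1 + R\dist(x,y))^{\alpha'} (1 + w_R(x,y))^\gamma |\Kern_{F(L)}(x,y)|\bigr] \cdot \bigl[(1 + w_R(x,y))^{-\gamma} (1 + R\dist(x,y))^{\alpha - \alpha'}\bigr],
\]
which is legitimate since $\alpha' \geq \alpha$. Applying the Cauchy--Schwarz inequality in $x$, the first factor is controlled in $L^2(dx)$ by Proposition~\ref{prp:speciall2weighted} (with exponents $\alpha'$, $\gamma$), contributing $C \,|B(y,R^{-1})|^{-1/2} \|F_{(R^2)}\|_{W_2^{\alpha' + \gamma}} \leq C\,|B(y,R^{-1})|^{-1/2} \|F_{(R^2)}\|_{W_2^\beta}$ after monotonicity of Sobolev norms since $\alpha'+\gamma \le \beta$; the second factor is controlled in $L^2(dx)$ by \eqref{eq:volumebound} (with $2\beta$ there replaced by $2(\alpha' - \alpha)$ and the requirement $\alpha' - \alpha > Q/2 - \gamma$, which is exactly the condition arranged above), contributing $C\,|B(y,R^{-1})|^{1/2}$. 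The two volume factors cancel, yielding \eqref{eq:speciall1weighted} uniformly in $R$ and $y$.

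Finally, \eqref{eq:l1offball} follows immediately: on the complement of $B(y,r)$ one has $1 + R\dist(x,y) \geq 1 + rR$, hence $1 \leq (1 + rR)^{-\alpha} (1 + R\dist(x,y))^\alpha$, and integrating this inequality against $|\Kern_{F(L)}(x,y)|$ and applying \eqref{eq:speciall1weighted} gives the claim. The only real point requiring care is the bookkeeping of the three parameters $\alpha'$, $\gamma$, and $\varepsilon$ so that all of the following hold at once: $\gamma < \min\{d_1,d_2\}/2$ (needed for \eqref{eq:volumebound}), $\alpha' > \alpha$ and $\alpha' + \gamma \le \beta$ (needed to apply Proposition~\ref{prp:speciall2weighted} and then monotonicity), and $\alpha' - \alpha > Q/2 - \gamma$ (needed for the integrability of the complementary weight); the hypothesis $\beta > \alpha + D/2$ is precisely what makes this system solvable, and verifying solvability is the main — though elementary — obstacle.
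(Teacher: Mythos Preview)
Your strategy is exactly the paper's: apply Cauchy--Schwarz, control one factor by the weighted $L^2$ estimate \eqref{eq:speciall2weighted} and the other by the integrability bound \eqref{eq:volumebound}, then read off \eqref{eq:l1offball} from \eqref{eq:speciall1weighted}.

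There is, however, a slip in your parameter bookkeeping. Proposition~\ref{prp:speciall2weighted} only requires the Sobolev order to be strictly greater than $\alpha'$; the exponent $\gamma$ does \emph{not} enter that condition. So the right constraint is $\alpha' < \beta$, not $\alpha' + \gamma \le \beta$. With your stated system
\[
\gamma < \tfrac{1}{2}\min\{d_1,d_2\},\qquad \alpha' + \gamma \le \beta,\qquad \alpha' - \alpha > \tfrac{Q}{2} - \gamma,
\]
one is forced into $\beta > \alpha + Q/2$, which is strictly stronger than the hypothesis $\beta > \alpha + D/2$; in particular your choice $\alpha' = \beta - \varepsilon$ with $\varepsilon$ small and $\gamma$ close to $\min\{d_1,d_2\}/2$ cannot satisfy $\alpha' + \gamma \le \beta$. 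Once you replace the spurious constraint by $\alpha' < \beta$, the system becomes solvable exactly under $\beta > \alpha + D/2$ (take $\gamma$ close to $\min\{d_1,d_2\}/2$ and then any $\alpha'$ with $\alpha + Q/2 - \gamma < \alpha' < \beta$), and the rest of your argument goes through verbatim. The minor inconsistency earlier --- writing ``$\alpha' > Q/2 - \gamma$'' where you mean ``$\alpha' - \alpha > Q/2 - \gamma$'' --- should also be corrected.
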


We are finally able to prove our main results.

\begin{proof}[Proof of Theorem~\ref{thm:maintheorem}]
We can follow the lines of the proof of \cite[Theorem 3.1]{duong_plancherel-type_2002}, where the inequality (4.18) there is replaced by our \eqref{eq:l1offball}.
\end{proof}

\begin{proof}[Proof of Theorem~\ref{thm:maintheoremriesz}]
Choose $\beta \in \leftopenint D/2, \kappa+1/2 \rightopenint$. Let $\eta \in C^\infty_c(\R)$ be supported in $\leftclosedint -1/2,1/2\rightclosedint$ and equal to $1$ in a neighborhood of the origin, and set $F(\lambda) = (1-|\lambda|)_+^\kappa$. The function $\eta F$ is smooth and compactly supported, while $(1-\eta)F$ is compactly supported away from the origin and belongs to $W_2^\beta$. The inequalities \eqref{eq:standardl1weighted} and \eqref{eq:speciall1weighted} then imply that the operators $\eta(tL) F(tL)$ and $(1-\eta(tL)) F(tL)$ are bounded on $L^1(\Space)$, uniformly in $t > 0$, and the same holds for their sum $(1-tL)_+^\kappa$. The conclusion follows by self-adjointness and interpolation.
\end{proof}

\section{Sharpness of the obtained results}\label{section:sharp}

The aim of this section is to show that, if $d_1 \ge d_2$, then the result in Theorem~\ref{thm:maintheorem} is sharp. More precisely, if $d_1 \ge d_2$ and $s<D/2=(d_1+d_2)/2$, then the first inequality in \eqref{eq:normbounds} cannot hold. Indeed, if we consider the functions 
$H_t(\lambda)=\lambda^{it}$, then, for $t>1$,
\[
C \|H_t \|_{MW_2^s} \sim t^s.
\]
On the other hand, we make the following observation. 
\begin{proposition}
Suppose that $L$ is the Grushin operator acting on $\Space = \R^{d_1}\times \R^{d_2}$.
Then the following lower bounds holds:
\[
\|H_t(L)\|_{L^1\to L^{1,w}}=\|L^{it}\|_{L^1\to L^{1,w}} \ge C (1+|t|)^{(d_1+d_2)/2}
\]
for all $t>0$. 
\end{proposition}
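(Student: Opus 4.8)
The plan is to adapt the argument of \cite{sikora_imaginary_2001}: although $L$ degenerates on $\{x'=0\}$, it is a genuine elliptic operator of topological dimension $d_1+d_2$ on $\{x'\neq 0\}$, and a lower bound for the norm only needs one such region. We may assume $|t|$ large, the bounded range of $t$ being easily dealt with by lower semicontinuity of $t\mapsto\|L^{it}\|_{L^1\to L^{1,w}}$ (a supremum of $L^{1,w}$-quasinorms, which are lower semicontinuous under $L^2$-convergence) together with the fact that $L^{it}\ne0$ for every $t$. First we would fix $y_0=(y_0',0)\in\Space$ with $|y_0'|=1$. By \eqref{eq:controldistance} and \eqref{eq:ballmeasure} there is an absolute constant $c_0>0$ such that, on $B(y_0,2c_0)$, the distance $\dist$ is comparable to the Euclidean distance of $\R^{d_1+d_2}$, one has $|B(y,r)|\sim r^{d_1+d_2}$ for $y\in B(y_0,c_0)$ and $0<r\le c_0$, and $L$ is elliptic with positive-definite principal symbol of rank $d_1+d_2$ everywhere on $B(y_0,2c_0)$. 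The goal is then to produce, for each large $|t|$, a non-negative $f_t$ with $\|f_t\|_1=1$ supported in $B(y_0,\delta_t)$, with $\delta_t\sim|t|^{-1}$, such that $|L^{it}f_t|\gtrsim|t|^{(d_1+d_2)/2}$ on a set of measure $\gtrsim1$; this gives $\|L^{it}\|_{L^1\to L^{1,w}}\gtrsim|t|^{(d_1+d_2)/2}$, and hence the assertion.

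The second step is to localise $L^{it}$ by finite propagation speed. The wave propagator $\cos(r\sqrt L)$ has finite propagation speed with respect to $\dist$ (energy estimates for the subelliptic wave equation; cf.\ \cite{robinson_analysis_2008}), mapping functions supported in $B(z,\sigma)$ into functions supported in $B(z,\sigma+|r|)$. Writing $m(\lambda)=|\lambda|^{2it}$, so that $m(\sqrt L)=L^{it}$, one picks an even $\rho\in C^\infty_c(\leftopenint -c_0,c_0\rightopenint)$ with $\rho\equiv1$ near $0$ and splits $m=m_0+m_\infty$ with $\widehat{m_0}=\rho\,\widehat m$. Then $m_0(\sqrt L)=\pi^{-1}\int_0^\infty\rho(r)\,\widehat m(r)\,\cos(r\sqrt L)\,dr$ again has finite propagation speed $\le c_0$, so for $y\in B(y_0,c_0)$ the function $m_0(\sqrt L)f$ (with $f$ supported near $y_0$) depends only on $L|_{B(y_0,2c_0)}$. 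Replacing $L$ on that ball by an everywhere elliptic, nonnegative, self-adjoint operator $\tilde L$ on $\R^{d_1+d_2}$ with the same coefficients on $B(y_0,2c_0)$, one gets (by uniqueness for the wave equation) $\Kern_{m_0(\sqrt L)}(x,y)=\Kern_{m_0(\sqrt{\tilde L})}(x,y)$ for all $x,y\in B(y_0,c_0)$.

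The third step extracts the main singularity. By the core computation of \cite{sikora_imaginary_2001}, $\tilde L^{it}$ is a pseudodifferential operator of order $0$ with principal symbol $p(y,\xi)^{it}$, whence near the diagonal $\Kern_{\tilde L^{it}}(x,y)=a_t(x,y)\,\dist_{\tilde L}(x,y)^{-(d_1+d_2)-2it}+(\text{less singular terms})$, where $\dist_{\tilde L}$ is the Riemannian distance attached to the principal symbol and $|a_t(x,y)|\sim|t|^{(d_1+d_2)/2}$ uniformly on $B(y_0,c_0)^2$ --- the power of $|t|$ coming, exactly as for $\mathcal F^{-1}(|\xi|^{2it})$ on $\R^{d_1+d_2}$, from the asymptotics $|\Gamma((d_1+d_2)/2+it)/\Gamma(-it)|\sim|t|^{(d_1+d_2)/2}$. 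Moreover $\widehat{m_\infty}=(1-\rho)\widehat m$ agrees, up to a term of size $O(1)$, with a classical symbol of negative order smooth away from $0$, so $\Kern_{m_\infty(\sqrt{\tilde L})}(x,y)$ and $\Kern_{m_\infty(\sqrt L)}(x,y)$ are $O(1)$ uniformly in $t$ on $\{(x,y)\in B(y_0,c_0)^2\tc \dist(x,y)\ge c_0/2\}$. One then takes $\delta_t=\epsilon c_0/|t|$ with $\epsilon$ a small absolute constant, so that $\dist_{\tilde L}(x,y)^{-2it}$ varies by $\le 1/10$ as $y$ ranges over $B(y_0,\delta_t)$ while $\dist_{\tilde L}(x,y_0)\in[c_0/2,c_0]$, and lets $f_t\ge0$, $\|f_t\|_1=1$, be supported in $B(y_0,\delta_t)$. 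For $x$ in the annulus $A=\{x\tc\dist(x,y_0)\in[c_0/2,c_0]\}\subset B(y_0,2c_0)$, which satisfies $|A|\sim1$,
\[
|L^{it}f_t(x)|=\Bigl|\int\bigl(\Kern_{m_0(\sqrt{\tilde L})}+\Kern_{m_\infty(\sqrt L)}\bigr)(x,y)\,f_t(y)\,dy\Bigr|\ge c\,|t|^{(d_1+d_2)/2}-C\ge\tfrac{c}{2}\,|t|^{(d_1+d_2)/2}
\]
for $|t|$ large, using that $\Kern_{\tilde L^{it}}=\Kern_{m_0(\sqrt{\tilde L})}+\Kern_{m_\infty(\sqrt{\tilde L})}$ is, on $A\times B(y_0,\delta_t)$, essentially constant in $y$ and of modulus $\sim|t|^{(d_1+d_2)/2}$. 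Hence $|\{x\tc|L^{it}f_t(x)|>\tfrac{c}{2}|t|^{(d_1+d_2)/2}\}|\gtrsim1$, and the lower bound follows.

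The hard part will be the third step: isolating the principal singularity of $\Kern_{\tilde L^{it}}$ with the \emph{sharp} amplitude $|t|^{(d_1+d_2)/2}$ while keeping the tail $m_\infty(\sqrt L)$ under control uniformly in $t$. The lower-order terms in the symbol expansion of $\tilde L^{it}$ carry higher powers of $t$ but correspondingly milder singularities, so one must balance the $t$-growth against the order of the singularity; this is exactly what forces the critical localisation scale $\delta_t\sim|t|^{-1}$ for $f_t$ (a fixed scale would let the correction terms dominate). Everything else --- finite propagation speed, the near-$y_0$ comparisons of volume and distance with the Euclidean ones, and the passage from the pointwise kernel bound to the weak-type bound --- is routine.
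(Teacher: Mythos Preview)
Your strategy---localise via finite propagation speed, transplant the local part to a genuinely elliptic comparison operator $\tilde L$, and read off the diagonal singularity of $\tilde L^{it}$---is a different route from the one taken in the paper. The paper follows \cite{sikora_imaginary_2001} directly through short-time heat-kernel asymptotics: for $y$ in the elliptic region $\{y'\neq0\}$ and $x$ near $y$ one shows
\[
\bigl|p_s(x,y)-|y'|^{-d_2}(4\pi s)^{-(d_1+d_2)/2}e^{-\dist(x,y)^2/4s}\bigr|\le C\,s^{1/2}\,s^{-(d_1+d_2)/2}\qquad(0<s<1),
\]
and then feeds this into the Mellin representation of $L^{it}$ in terms of $e^{-sL}$, exactly as in \cite{sikora_imaginary_2001}, to obtain the full kernel of $L^{it}$ near the diagonal with the correct amplitude $\sim|t|^{(d_1+d_2)/2}$. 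No local/non-local splitting of $L^{it}$ is needed, and the whole $t$-growth comes from the single factor $1/\Gamma(-it)$.

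Your argument, as written, has a genuine gap in the treatment of the tail $m_\infty$. The assertion that $\Kern_{m_\infty(\sqrt{\tilde L})}(x,y)$ is $O(1)$ uniformly in $t$ on $\{(x,y)\in B(y_0,c_0)^2:\dist(x,y)\ge c_0/2\}$ is already false for the flat model $\tilde L=-\Delta$ on $\R^{d_1+d_2}$. By finite propagation $\Kern_{m_0(\sqrt{\tilde L})}(x,y)=0$ whenever $|x-y|>c_0$, hence on that set
\[
\Kern_{m_\infty(\sqrt{\tilde L})}(x,y)=\Kern_{\tilde L^{it}}(x,y)=c(t)\,|x-y|^{-(d_1+d_2)-2it},\qquad |c(t)|\sim|t|^{(d_1+d_2)/2},
\]
which is of the \emph{same} size as your main term, not $O(1)$. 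The region $\{(x,y)\in B(y_0,c_0)^2:\dist(x,y)\ge c_0/2\}$ certainly contains pairs with $|x-y|>c_0$, so the claim fails there, and nothing forces a drop to $O(1)$ on the inner part of the annulus either. (Equivalently: $m_\infty=m-\check\rho*m$ is bounded but effectively lives on $\{\lambda\lesssim|t|\}$, since $\rho\equiv1$ near $0$ kills all moments; such a multiplier has no uniform-in-$t$ kernel decay at scale $c_0$.) Consequently the correction $m_\infty(\sqrt L)f_t-m_\infty(\sqrt{\tilde L})f_t$ is a priori of order $|t|^{(d_1+d_2)/2}$ as well, and the lower bound for $|L^{it}f_t|$ on $A$ does not follow from your decomposition.

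The transplantation idea is sound, but it has to be carried out at the level of the heat kernel rather than the wave cutoff: for small $s$ the kernels $p_s^L$ and $p_s^{\tilde L}$ agree on $B(y_0,c_0)^2$ up to a relative error $O(s^{1/2})$ (parabolic parametrix, or Duhamel combined with \eqref{eq:gaussianbounds}), which is exactly the heat-kernel asymptotic above and brings you back to the paper's argument.
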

\begin{proof}
Because the Grushin operator is elliptic on $\Space_0 = \{ x \in \Space \tc x' \neq 0\}$, one can use the same argument as in \cite{sikora_imaginary_2001} to prove that, for all $y \in \Space_0$,
\[
|p_t(x,y) - |y'|^{- d_2}(4\pi
t)^{-(d_1+d_2)/2}{\rm e}^{-\dist(x,y)^2/4t}|
 \le C
t^{1/2}t^{-(d_1+d_2)/2}
\]
for all $x$ in a small neighborhood of $y$ and all $t \in \leftopenint 0,1 \rightopenint$. Here $p_t=\Kern_{\exp(-tL)}$ is the heat kernel corresponding to the Grushin operator. The rest of the argument is the same as in \cite{sikora_imaginary_2001}, so we skip it here. 
\end{proof}

\bibliographystyle{abbrv}

\end{document}